\documentclass[a4paper]{amsart}

\pdfoutput=1

\usepackage[T1]{fontenc}
\usepackage[utf8]{inputenc}
\usepackage{microtype}
\usepackage{amsmath,amssymb,braket,tikz}
\usetikzlibrary{tikzmark,calc}
\usepackage{comment}
\usepackage{url}
\usepackage{hyperref}
\usepackage{tikz}
\usepackage{tikz-cd} 
\usepackage{array}
\usepackage[english]{babel}
\usepackage{amsthm}
\usepackage{amsmath}
\usepackage{ dsfont }
\usepackage{caption}
\usepackage{amsfonts}
\usepackage{epigraph}
\usepackage{amssymb}
\usepackage{blindtext}
\usepackage{subcaption}
\usepackage{caption}
\usepackage{tabu}

\usepackage{multicol, latexsym}
\usepackage{blindtext}
\usepackage{mdframed}
\usepackage{ bbold }
\usepackage{afterpage}
\usepackage{comment}
\usepackage{todonotes}
\definecolor{cSofia}{rgb}{0.1,0.45,0.03}
\definecolor{cChristian}{rgb}{0.6,0.5,0.0}

\usetikzlibrary{positioning,shadows,arrows}

\usepackage{tikz}
\usetikzlibrary{trees}
\usepackage{float}
\newcommand{\R}{\mathds{R}}
\newcommand{\Q}{\mathds{Q}}
\newcommand{\Z}{\mathds{Z}}

\newcommand{\N}{\mathds{N}}

\newcommand{\cd}{\text{cd}}
\DeclareMathOperator{\core}{core}
\newcommand{\finecore}{\core^F\!}
\newcommand{\fineCD}[1]{\mu^F\!(#1)}
\newcommand{\fineSpec}[1]{\mathcal{S}^F\!(#1)}
\newcommand{\vac}{\varnothing}

\DeclareMathOperator{\relint}{relint}
\DeclareMathOperator{\pyrOperator}{Pyr}
\newcommand{\pyr}[1][P]{\pyrOperator(#1)}

\newcommand{\fineadjoint}[2][s]{#2^{F(#1)}}

\newcommand{\numerators}[1][d]{\mathcal{I}_#1}

\usepackage{amssymb, amsfonts}
\usepackage{mathtools}

\usepackage{paralist}

\usepackage{xcolor}

\usepackage[english]{babel} % for biblatex
\usepackage[autostyle]{csquotes} % for biblatex
\usepackage{tikz, tikz-cd}

%%%
% START: thm-environments
%%%
\theoremstyle{plain}
\newtheorem{theorem}{Theorem}[section]
\newtheorem*{theorem*}{Theorem}
\newtheorem{prop}[theorem]{Proposition}
\newtheorem{corollary}[theorem]{Corollary}
\newtheorem{lemma}[theorem]{Lemma}

\theoremstyle{definition}
\newtheorem{definition}[theorem]{Definition}
\newtheorem{example}[theorem]{Example}
\newtheorem{remark}[theorem]{Remark}
\newtheorem{question}[theorem]{Question}
%%% 
% END: thm-environments
%%%

%%%
% START: math-macros
%%%
\newcommand{\rleft}{\mathopen{}\mathclose\bgroup\left}
\newcommand{\rright}{\aftergroup\egroup\right}

\DeclareMathOperator{\conv}{conv}

\newcommand{\st}{\: | \:}

 % there is no \mathbb{k} in amsfonts

 % {R^{(1)}} causes problems with subscripts, therefore we define this as R^{(1)} without extra brackets

\newcommand{\<}{\langle}
\renewcommand{\>}{\rangle}

%%%
% END: math-macros
%%%

\title{Classifying the Fine Polyhedral Spectrum}

\author{Sofía Garzón Mora}
\address{Sofía Garzón Mora, Mathematik, Freie Universität Berlin, 14195 Berlin, Germany.}
\email{sofia.garzon.mora@fu-berlin.de}
%\thanks{}

\author{Christian Haase}
\address{Christian Haase, Mathematik, Freie Universität Berlin, 14195 Berlin, Germany.}
\curraddr{}
\email{haase@math.fu-berlin.de}
%\thanks{}

%\subjclass[2021]{Primary: ; Secondary: }
\keywords{Polyhedral adjunction theory, Fine interior, lattice polytope, toric variety, MILP.}

\setlength{\parindent}{0pt}

\begin{document}
\selectlanguage{english}

\begin{abstract}
In this paper, we examine an analogue of the recently solved spectrum conjecture by Fujita in the setting of Fine polyhedral adjunction theory. 
%Building upon a version of this conjecture proven by Paffenholz \cite{Paf15} for $\Q$-Gorenstein toric varieties in the context of polyhedral adjunction, we study an analogous version within the framework of Fine polyhedral adjunction. 
We present computational results for lower-dimensional polytopes, which lead to a complete classification of the highest numbers of this Fine spectrum in any dimension. Moreover, we present a full classification of the Fine spectrum in dimensions one, two and three, while providing a framework for general classification results in any dimension.
\end{abstract}

\maketitle{}

\section{Introduction}
\label{sec:intro}

% 3-4 lines historical background: Adjunction theory, alg varieties,
% polyhedral adjunction, Fine interior/Fine Adjunction.
% Fujita's spectrum conjecture with Minimal Model Program. Paffenholz
% and Fujita conjeture
% In this paper we study this finite spectrum set. 

% The theory of adjunction was introduced a few decades ago within the
% framework of algebraic geometry in relation to the Minimal Model
% Program, aimed at classifying algebraic varieties.
The study of adjoint linear systems $|K_X + tL|$ on an algebraic
variety $X$, where $K_X$ is the canonical bundle, $t \geq 0$ and $L$
is an ample line bundle, is, in some sense, a precursor of the Minimal
Model Program. When studying adjunction theory for polarized
varieties, Fujita conjectured in \cite{fujita1992kodaira} and
\cite{fujita1995kodaira} that the set of pseudo-effective thresholds
of smooth polarized $d$-folds should be finite away from 0. This
conjecture has recently been proven, see \cite{di17fujita}. Polyhedral
adjunction theory has been studied in \cite{DiR14}, and Fujita's
spectrum conjecture for toric varieties, allowing canonical
singularities, was proven by Andreas Paffenholz in \cite{Paf15}.

The theory of Fine Polyhedral Adjunction, presented in
\cite{mora2023fine}, has been constructed as a variant of polyhedral
adjunction theory \cite{DiR14}, where it has been proven that the
Spectrum Conjecture also holds in the Fine Adjunction case, without an
assumption on the (toric) singularities. Our main theorem in this
paper is a characterization of the highest numbers in the
spectrum. Here, we completely characterize the spectrum in dimensions
one and two and provide some general classification results for
polytopes of any dimension $d$.

We have computed the Fine $\Q$-codegree for most low-dimensional
polytopes classified by volume that have been previously obtained by
Gabriele Balletti \cite{Bal18}. Additionally, we investigate further
classification results computationally by expressing our
classification problem in the language of a mixed integer linear
program (MILP). As a result, we develop further tools aimed at a full
classification of the Fine $\Q$-codegree spectrum, where we see that
this classification not only refines and combines existing results in
Polyhedral Adjunction Theory with the classification of lattice
polytopes, but also reveals the strong structural properties of our
underlying theory.

\section*{Acknowledgments}
\noindent
SGM was supported by the Deutsche Forschungsgemeinschaft
(DFG) under the Graduiertenkolleg ``Facets of Complexity'' (GRK 2434)
and under Germany's Excellence Strategy – The Berlin Mathematics
Research Center MATH+ (EXC-2046/1, project ID: 390685689). CH was supported by the SPP Priority Program 2458: Combinatorial
Synergies. We would like to thank Benjamin Nill and Andreas Paffenholz
for very enlightening discussions and for aiding with computational
questions on the subject.

\section{Preliminaries}
\label{sec:prelims}
\noindent
We start by including here some crucial definitions with minimal
descriptions. For further details, we refer the reader to
\cite{mora2023fine}.

Let $P \subseteq \R^d$ be a $d$-dimensional rational polytope,
described in a unique minimal way as $P = \{x \in \R^d \st \langle
a_i, x \rangle \geq b_i, i=1,...,m\},$ where the $a_i \in (\Z^d)^*$
are the primitive rows of a matrix $A$, meaning they are not the
multiple of another lattice vector, and $b=(b_1,...,b_m) \in \Q^m$. We
refer to $P$ as a \textit{lattice polytope} if its vertices lie in
$\Z^d$.

\begin{definition}
Let $f$ be the affine functional $f(x) = \langle a , x \rangle -b$ for
some $b \in \Q$ and $a$ an element of the dual lattice
$(\Z^d)^*$. Such a functional is said to be \textit{valid} for a
polytope $P$ if for the halfspace $\mathcal{H}_+ := \{x \in \R^d \st
f(x) \geq 0\},$ we have that $P \subseteq \mathcal{H}_+$.
If, in addition, there is some $p \in P$ with $f(p)=0$, i.e., at least
one point of $P$ lies in $\mathcal{H}$, the hyperplane defined by $f$,
we say $f$ is a \textit{tight} valid inequality for $P$.
\end{definition}

% Since $P$ is a polyhedron, it can be described by a finite subset of
% all tight valid inequalities for $P$ of which there is an infinite
% number, namely at least one for each primitive $a \in (\Z^d)^*
% \setminus \{0\}$.

\begin{definition} \label{def: fineadj}
% Let $P$ a rational polytope given by the inequalities $f_i$ written
% as $\langle a_i, \cdot \rangle \geq b_i$ for $i=1,...,m$ as
% above. Then we define the \textit{Fine lattice distance of $a_i$ to
% $P$}, $d^F_P(f_i)$, via the function
% $$d^F_P := (\R^n)^* \to \R, \quad f_i \mapsto \min_{x \in P} \langle
% a_i, x \rangle.$$
Let $\alpha \in (\R^d)^*$. We define the \textit{support} function
associated with $P$ as
$$h_P : (\R^d)^* \to \R, \quad \alpha \mapsto \min_{x \in P} \langle
\alpha, x \rangle.$$ 
For some fixed real number $s > 0$, we may define the \textit{Fine
  adjoint polytope}, which is a rational polytope, as
$$\fineadjoint{P} := \{ x \in \R^d \st \langle a , x \rangle \geq
h_P(a) + s, \text{ for all } a \in (\Z^d)^* \setminus \{0\} \}.$$
%We will refer to the study of such Fine adjoint polytopes as
%\textit{Fine polyhedral adjunction theory}.
\end{definition}

Here we introduce the notion of a relevant inequality.

\begin{definition}
Let $\mathcal{F}$ be the set of all valid inequalities for $P$, where
an element $f \in \mathcal{F}$ is of the form $\langle a_f , x \rangle
\geq b_f$. A valid inequality $f \in \mathcal{F}$ is said to be
\textit{relevant} for $P$ if for some $s > 0$, it holds that
$$\{x \in \R^d \st \langle a_f , x \rangle \geq h_P (a_f) + s \text{ }
\forall f \in \mathcal{F} \} \neq \{ x \in \R^d \st \langle a_f, x
\rangle \geq h_P (a_f) + s \text{ }\forall f \in \mathcal{F} \setminus
\{f\}\}.$$
The valid inequality $f$ is said to be \textit{irrelevant} if it is
not relevant.
\end{definition}

\begin{theorem} \cite[Corollary 2.7]{mora2023fine} \label{cor: relevantineqs}
Let $P$ be a rational polytope of dimension $d$. The valid relevant
inequalities for $P$ of the form $\langle a, \cdot \rangle \geq
h^F(a)$ are among the $a \in (\mathbb{Z}^d)^*$ such that $a \in
\operatorname{conv}( a_1,...,a_m)$ where the $a_i$ for $1 \leq i \leq
m$ are the primitive inward pointing facet normals of $P$.
\end{theorem}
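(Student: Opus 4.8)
The plan is to prove the contrapositive: if $a \in (\Z^d)^* \setminus \{0\}$ lies outside $\conv(a_1,\dots,a_m)$, then the adjoint inequality $\langle a, x\rangle \ge h_P(a)+s$ is redundant in the defining system of $\fineadjoint{P}$ for every $s>0$, and hence irrelevant. The starting observation is that, because $P$ is a bounded full-dimensional polytope, its recession cone is trivial; dualizing gives $\cone(a_1,\dots,a_m) = (\R^d)^*$. In particular every $a$ admits a representation $a = \sum_i \lambda_i a_i$ with $\lambda_i \ge 0$, and since the $a_i$ carry a nonzero nonnegative linear dependence (obtain one by writing $-a_1 \in \cone(a_1,\dots,a_m)$), the attainable total weights $\sum_i \lambda_i$ form an interval $[t(a),\infty)$, where $t(a) := \min\{\sum_i \lambda_i \st \lambda_i \ge 0,\ \sum_i \lambda_i a_i = a\}$.

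First I would record the elementary dictionary between convex-hull membership and this minimal weight: $a \in \conv(a_1,\dots,a_m)$ if and only if $t(a) \le 1$. Indeed, a convex representation has weight exactly $1$, and conversely a representation of weight $t(a)\le 1$ can be inflated to weight $1$ using the nonnegative dependence among the $a_i$. Thus the hypothesis $a \notin \conv(a_1,\dots,a_m)$ is equivalent to the strict bound $t(a) > 1$, which is the only arithmetic fact I will need about the hypothesis.

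Next I would translate redundancy into a statement about support functions. Since facet $i$ is tight, $h_P(a_i) = b_i$, so the inequalities of $\fineadjoint{P}$ indexed by the facet normals are exactly $\langle a_i, x\rangle \ge b_i + s$; these cut out the shrunken polytope $Q_s := \{x \in \R^d \st \langle a_i, x\rangle \ge b_i + s,\ i=1,\dots,m\}$, and the inequality for $a$ is implied by them precisely when $\min_{x\in Q_s}\langle a,x\rangle \ge h_P(a)+s$. By strong linear programming duality, $h_P(a) = \max\{\sum_i \lambda_i b_i \st \lambda_i \ge 0,\ \sum_i \lambda_i a_i = a\}$; let $\lambda^*$ attain this maximum, so $\sum_i \lambda_i^* b_i = h_P(a)$. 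As $\lambda^*$ is one feasible representation of $a$, its weight obeys $\sum_i \lambda_i^* \ge t(a) > 1$, and $\lambda^*$ is simultaneously dual-feasible for $Q_s$, so weak duality yields
\[
\min_{x \in Q_s}\langle a, x\rangle \;\ge\; \sum_i \lambda_i^*(b_i + s) \;=\; h_P(a) + s\sum_i \lambda_i^* \;>\; h_P(a) + s
\]
for every $s>0$. Hence the inequality for $a$ is strictly implied by the facet inequalities already present in $\fineadjoint{P}$, so deleting it changes nothing and $a$ is irrelevant.

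The LP-duality manipulations and the weak-duality bound $\sum_i \lambda_i b_i \le h_P(a)$ are routine. The step that deserves care is the equivalence $a \notin \conv(a_1,\dots,a_m) \Leftrightarrow t(a) > 1$, together with the observation that a dual-optimal $\lambda^*$ is merely \emph{one} feasible representation, so its weight is bounded below by the minimal weight $t(a)$ rather than equal to it. The main conceptual obstacle is recognizing that redundancy need only be tested against the facet-normal inequalities: redundancy there forces redundancy against the entire infinite collection defining $\fineadjoint{P}$, which is exactly what reduces the a priori infinite relevance condition to the single finite linear program above.
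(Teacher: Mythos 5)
Your proof is correct. Note that the paper itself imports this statement from \cite[Corollary 2.7]{mora2023fine} without reproving it; your argument --- reducing relevance to redundancy against the facet-normal inequalities, then using strong LP duality for $h_P(a)$ together with the observation that $a \notin \operatorname{conv}(a_1,\dots,a_m)$ forces every nonnegative representation $a = \sum_i \lambda_i a_i$ to have total weight $\sum_i \lambda_i > 1$, so that the adjoint inequality is strictly implied for every $s>0$ --- is essentially the standard duality proof given in that reference.
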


In what follows we will refer to $P$ as the polytope defined in the
following way:
\begin{equation} \label{eq: minimaldescription}
    P = \{x \in \R^d \st \langle a_i, x \rangle \geq b_i, i=1,...,m\},
\end{equation}
where $b_i \in \Q$ and $a_i \in (\Z^d)^*$ are the primitive rows of a
matrix $A$ including all relevant valid inequalities for $P$.

\begin{definition}
The \textit{Fine $\Q$-codegree} of a rational polytope $P$ is 
$$\mu^{F}(P) := (\sup \{ s > 0 \st P^{F(s)} \neq \emptyset \} )^{-1},$$
and the \textit{Fine core} of $P$ is 
$$\finecore(P) := P^{(1/\mu^{F}(P))}.$$
We will refer to the quantity 
$$n^ F\!(P) := \frac{1}{\mu^ F(P)}$$
as the \textit{Fine Number} of $P$.
\end{definition}

From the definition we see for $s \in \Q_{>0}$ 
\begin{gather} \label{eq:homgeneity}
  n^F\!(sP) = s n^F\!(P) \text{ and therefore } s\fineCD{sP} =
  \fineCD{P} \, \text{ and}
  \\
  \text{if } P \subseteq Q, \text{ then } P^{F(s)} \subseteq Q^{F(s)} \text{ so that }
  n^F(P) \leq n^F(Q) \,. \label{eq:monotonicity}
\end{gather}
The monotonicity \eqref{eq:monotonicity} does not hold for non-Fine adjunction.
We now introduce one more definition, adapted from \cite{DiR14} which
will be very useful for our computations in later sections.

\begin{definition}
Let $P$ be a $d$-dimensional polytope. The Fine Mountain $M^F(P)
\subseteq \R^{d+1}$ of $P$ is given by 
$$M^F(P) := \{(x,s) \st x \in P, 0 \leq s \leq \langle a,x\rangle -
h_P(a), \text{ for all } a \in (\Z^d)^* \setminus \{0\} \}$$
and if $P$ has its inequality description as in \eqref{eq:
  minimaldescription}, then $M^F(P)$ can be described by 
$$M^F(P) = \{(x,s) \in \R^{d+1} \st (A | - \mathbb{1})(x,s)^T \geq b,
s \geq 0 \}.$$ 
Therefore, the Fine mountain of $P$ is a polytope such that for all
$s_0 \geq 0$ 
$$M^F(P) \cap \R^d \times \{s_0\} = P^{F(s_0)} \times \{s_0\}.$$
\end{definition}

As a means of comparison, we will now define the notion of codegree
which comes up in the context of Ehrhart Theory of lattice polytopes
\cite{BN07}.

\begin{definition}
Let $P$ be a lattice $d$-polytope. We define the \textit{codegree} as
\[
\operatorname{cd}(P) := \min \{k \in \N_{\geq 1} \st
\operatorname{int}(kP) \cap \Z^d \neq \emptyset \}.
\]
Then the \textit{degree} of $P$ is $\deg(P) \coloneq d+1 -
\operatorname{cd}(P)$.
\end{definition}
Observe that $\emptyset \neq \operatorname{int}(\operatorname{cd}(P)
P) \cap \Z^d \subseteq \fineadjoint[1]{(\operatorname{cd}(P) P)}$ so
that $\fineCD{P} \leq \operatorname{cd}(P)$.

\subsection{Finiteness of the Fine $\Q$-codegree spectrum}
\label{sec: finespectrum}

It is proven in \cite{mora2023fine} that in each dimension, when
bounded from below by some $\varepsilon >0$, the set of values that
the Fine $\Q$-codegree can take is finite.

In \cite{Paf15}, in particular, the following Theorem is shown.

\begin{theorem}[Paffenholz, {\cite[Theorem 3.1]{Paf15}}] \label{Thm: finiteqcodspec}
Let $d \in \N$ and $\varepsilon > 0$ be given. Then
$$\{\mu(P) \st P  \text{ is a $d$-dimensional lattice polytope with
  canonical normal fan}\} \cap \R_{\geq \varepsilon}$$
is finite.
\end{theorem}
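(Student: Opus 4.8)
The plan is to reduce the statement to bounding the denominator of $\mu(P)$ as a reduced fraction, and then to extract that bound from the canonical hypothesis. First I would recast $n(P) := 1/\mu(P)$ as a linear program. Writing $P = \{x : \langle a_i, x\rangle \ge b_i,\ i = 1,\dots,m\}$ with primitive $a_i \in (\Z^d)^*$, the (non-Fine) adjoint mountain is
\[
M(P) = \{(x,s) \in \R^{d+1} : \langle a_i, x\rangle - s \ge b_i \ (i=1,\dots,m),\ s \ge 0\},
\]
and $n(P) = \max\{s : (x,s) \in M(P)\}$. Because $P$ is a lattice polytope, each $b_i = h_P(a_i) \in \Z$. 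The maximum is attained at a vertex of $M(P)$ at which $d+1$ of the facet constraints are active (with $s^*>0$ whenever $\mu(P)$ is finite); calling their normals $a_{i_0},\dots,a_{i_d}$ and heights $b_{i_0},\dots,b_{i_d}$, Cramer's rule gives
\[
n(P) = \frac{\det(M_s)}{\det(M)}, \qquad M = \begin{pmatrix} a_{i_0} & -1 \\ \vdots & \vdots \\ a_{i_d} & -1 \end{pmatrix},
\]
where $M_s$ is $M$ with its last column replaced by $(b_{i_0},\dots,b_{i_d})^{T}$. Both determinants are integers, so $\mu(P) = \det(M)/\det(M_s) \in \Q$.

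Second, I would pin $\mu(P)$ inside a bounded interval. The hypothesis gives $\mu(P) \ge \varepsilon$. For the upper bound, the non-Fine analogue of the observation $\fineCD{P} \le \cd(P)$ recorded above gives $\mu(P) \le \cd(P)$, and the codegree of a lattice $d$-polytope satisfies $\cd(P) \le d+1$ (equivalently, the degree is nonnegative). Hence $\mu(P) \in [\varepsilon, d+1]$. In a bounded interval there are only finitely many rationals whose reduced denominator is bounded by a fixed constant, so the entire statement reduces to the following claim: the denominator of $\mu(P)$, in lowest terms, is bounded in terms of $d$ and $\varepsilon$ only. Equivalently, one must bound $\det(M_s)$ after cancelling its common factors with $\det(M)$.

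Third --- and this is the heart of the matter --- I would feed the canonical hypothesis into this determinant. Canonicity is a condition on the cones of the normal fan, which are exactly the normal cones at the vertices of $P$; at the optimum it controls the arithmetic of the facet set $\{a_{i_j}\}$ meeting near the peak. Combined with the bound $n(P) = \det(M_s)/\det(M) \le 1/\varepsilon$, which limits the numerator relative to the denominator, this should force the reduced denominator of $\mu(P)$ to be bounded, completing the argument. A robust way to organise this is by induction on $d$, reducing the peak computation to a genuinely lower-dimensional canonical configuration supported on the core $\core(P)$ and invoking the inductive bound there.

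I expect the translation in the third step to be the main obstacle, for a concrete reason: canonicity does \emph{not} bound the determinant of a single cone. For instance $\cone((1,0),(1,N))$ is canonical for every $N$, yet has index $N$. The denominator bound therefore cannot come from local cone volumes alone; it must be wrung out of the global optimum, using that a lattice polytope whose vertex sits in such a high-index cone is forced to have correspondingly large, congruence-constrained heights $b_i$. That canonicity is indispensable here --- rather than a convenient hypothesis --- is already visible in dimension two: the ``squished'' triangles $\conv\{(0,0),(N,0),(0,1)\}$ have $\mu(P) = 1 + 2/N$, an infinite set accumulating at $1$, and they are ruled out precisely because their normal fans fail to be canonical once $N \ge 3$.
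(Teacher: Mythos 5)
Your proposal is a plan, not a proof, and the gap sits exactly where the theorem lives. Steps one and two are correct but essentially free: via the mountain construction and Cramer's rule you obtain $\mu(P) \in \Q \cap [\varepsilon, d+1]$ --- the same determinant mechanism this paper uses in Theorem~\ref{thm: FiniteNumerators}, cf.~\eqref{eq:FineNumberDet} --- but $\Q \cap [\varepsilon, d+1]$ is infinite, so the entire content rests on the step-three claim that the reduced denominator is bounded, which you state as a hope (``this should force\dots'') and simultaneously flag as the main obstacle. You never formulate, let alone prove, a lemma converting canonicity of the normal fan into an arithmetic bound. Your own (correct) sanity checks show why no such lemma is cheap: $\cone((1,0),(1,N))$ is canonical of index $N$, so canonicity bounds no individual cone determinant, and the triangles $\conv\{(0,0),(N,0),(0,1)\}$ with $\mu = 1 + 2/N$ show the hypothesis must act globally. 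Consequently the inductive scheme you gesture at (``reduce the peak computation to a lower-dimensional canonical configuration on the core'') has no identified engine: the normals tight at the top of the mountain need not span a cone of the normal fan, the heights $b_i$ are unbounded, and the cancellation between $\det(M)$ and $\det(M_s)$ that you need is precisely the assertion of Paffenholz's theorem, not a consequence of your setup.

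For comparison: this paper does not reprove the statement --- it quotes \cite{Paf15} --- but the finiteness proofs in this circle, as the paper sketches after Theorem~\ref{thm:finitespectrum} for the Fine analogue, have a two-part architecture that your plan lacks. First one proves that, up to lattice equivalence and in fixed dimension $d$, only finitely many configurations of core normals can occur; in the Fine setting these are controlled by polytopes with the origin as unique interior lattice point, finite by \cite{LZ91}, and in the canonical setting this configuration-finiteness is where the hypothesis on the normal fan is actually consumed. Second, each fixed configuration contributes only finitely many spectrum values above $\varepsilon$, via exactly the Cramer determinant you wrote down. Your steps one and two reconstruct the second part; the first part --- the heart --- is missing, and without it (or an equivalent denominator bound) the proposal establishes nothing beyond rationality and boundedness of $\mu(P)$.
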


Note that in the context of algebraic geometry, this is Fujita's
conjecture for the case of toric varieties with at most canonical
singularities.

The following Fine version of this result has been proven in
\cite{mora2023fine}. It allows us to ask the classification question
of the Fine spectrum
$$\fineSpec{d} \coloneq \left\{\fineCD{P} \st P
  \text{ is a $d$-dimensional lattice polytope} \right\}$$
that this paper is about.

\begin{theorem}\cite[Theorem 5.4]{mora2023fine} \label{thm:finitespectrum}
Let $d \in \N$ and $\varepsilon > 0$ be given. Then
$ \fineSpec{d} \cap \R_{\geq \varepsilon}$ is finite.
\end{theorem}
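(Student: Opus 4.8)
The plan is to reduce the value $n^F(P) := 1/\fineCD{P}$ to a controlled expression in bounded data. Since $\fineCD{P} \geq \varepsilon$ is equivalent to $n^F(P) \leq 1/\varepsilon$, it suffices to show that only finitely many values $n^F(P) \leq 1/\varepsilon$ occur. First I would observe that $n^F(P)$ is attained: the Fine Mountain $M^F(P)$ is a bounded polytope, so its maximal height is realized at a vertex $(x^*,s^*)$ with $s^*=n^F(P)$. This apex lies on facets of $M^F(P)$ other than $\{s=0\}$, i.e.\ there are relevant lattice normals $a^{(i)}\in(\Z^d)^*$ with $\langle a^{(i)},x^*\rangle - s^* = h_P(a^{(i)})$. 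Because $s^*$ is maximal, the objective $(0,1)$ lies in the normal cone at the apex, i.e.\ $(0,1)\in\cone\{(-a^{(i)},1)\}$, which forces $0$ to be a convex combination of the active normals; passing to a face I may assume $a^{(1)},\dots,a^{(k)}$ are affinely independent with $0\in\relint\conv\{a^{(i)}\}$ and unique barycentric weights $\lambda_i>0$, $\sum_i\lambda_i a^{(i)}=0$, $\sum_i\lambda_i=1$.

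The key identity comes from summing the active equations weighted by $\lambda_i$: the $x^*$-terms cancel via $\sum_i\lambda_i a^{(i)}=0$, and one obtains $n^F(P)=s^*=-\sum_i\lambda_i\,h_P(a^{(i)})$. Since $P$ is a lattice polytope and $a^{(i)}\in(\Z^d)^*$, each $h_P(a^{(i)})$ is an integer. Thus $n^F(P)$ is determined by the normals $\{a^{(i)}\}$ (through the weights $\lambda_i$) together with the integers $h_P(a^{(i)})$. As the whole construction is invariant under $\mathrm{GL}_d(\Z)$, I am free to normalize the normals by a unimodular transformation without changing $n^F(P)$.

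The heart of the argument, and the main obstacle, is to confine the normals $a^{(i)}$ to finitely many unimodular classes. For this I translate so that the apex sits at the origin, obtaining $P_0$ with $0\in P_0^{F(s^*)}$; this yields the uniform reach bound $h_{P_0}(a)\leq -s^*$ for \emph{every} nonzero lattice $a$, with $h_{P_0}(a^{(i)})=-s^*$ on the active normals. I then study the lattice simplex $S=\conv\{-a^{(i)}\}$, which has $0$ in its relative interior, and I claim $\relint(S)\cap\Z^d=\{0\}$. Indeed, if $p\neq 0$ were another lattice point, writing $-p=\sum_i\beta_i a^{(i)}$ with all $\beta_i>0$, superadditivity of the support function gives $h_{P_0}(-p)\geq\sum_i\beta_i h_{P_0}(a^{(i)})=-s^*$, while the reach bound gives $h_{P_0}(-p)\leq -s^*$; equality forces the active facets $\{\langle a^{(i)},\cdot\rangle=-s^*\}$ to share a common point $w$, and pairing $w$ with $0=\sum_i\lambda_i a^{(i)}$ yields $0=-s^*<0$, a contradiction. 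Hence $S$ is a lattice simplex whose only relative-interior lattice point is the origin. It is exactly here that the Fine feature — that all lattice directions, not merely facet directions, enter the adjoint — does the essential work.

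Finally I would invoke the theorem of Hensley (and Lagarias--Ziegler) that lattice polytopes with a fixed number of interior lattice points form finitely many unimodular classes of bounded volume. Applying it to $S$ inside the saturated sublattice $\Z^d\cap\Span\{a^{(i)}\}$ and extending the normalizing map to $\mathrm{GL}_d(\Z)$, I may assume the $a^{(i)}$, hence the weights $\lambda_i$, range over a finite set; let $D$ be a common denominator for all the $\lambda_i$ that occur. Then $n^F(P)=-\sum_i\lambda_i h_P(a^{(i)})\in\tfrac1D\Z$, and together with $0<n^F(P)\leq 1/\varepsilon$ this leaves only finitely many possible values. Taking reciprocals shows $\fineSpec{d}\cap\R_{\geq\varepsilon}$ is finite. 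The only genuinely delicate step is the one-interior-lattice-point claim for $S$, which packages the combinatorics of the apex into an input for the classical finiteness theorem.
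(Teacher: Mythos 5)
Your proposal is correct and takes essentially the same two-part route as the paper's (sketched) proof: your claim that the active normals at the apex span a lattice simplex whose only relative-interior lattice point is the origin is exactly the paper's reduction to finitely many Fine core normal configurations via Hensley/Lagarias--Ziegler, and your bounded-denominator identity $n^F(P) = -\sum_i \lambda_i\, h_P(a^{(i)})$ is the same mechanism (cf.\ the Cramer's-rule formula in Theorem~\ref{thm: FiniteNumerators}) by which each configuration yields only finitely many values above $\varepsilon$. The delicate steps --- the superadditivity/equality analysis forcing a common point $w$, and the extension of the normalizing map from the saturated sublattice to $\mathrm{GL}_d(\Z)$ --- all check out, so nothing further is needed.
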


For details of this proof, we refer the reader to the original
paper. However, it is worth mentioning here that the proof consists of
two main parts. For the first part, and for future use, we need the
notion of Fine core normal:

\begin{definition}
  A functional $a \in (\Z^d)^*$ is a \textit{Fine core normal} if
  % it defines a relevant inequality and if
  %\christian{removed the ``relevant'' requirement}
  $$\langle a_0 , y \rangle = h_P(a_0) + \fineCD{P}^{-1}$$
  for all $y \in \operatorname{core}^F(P)$.
  The convex hull in $(\R^d)^*$ of the Fine core normals will be
  denoted $A_{\core}^F$.
\end{definition}

It was shown that up to lattice equivalence, for a fixed $d \in
\Z_{>0}$, there are only finitely many sets which can be the Fine core
normals for $d$-dimensional lattice polytopes. Then it was shown that
each such configuration of Fine core normals gives rise to finitely
many values for the Fine $\Q$-codegree above any positive threshold
$\varepsilon$.

If we let $P$ be described by all relevant inequalities as in
\eqref{eq: minimaldescription}, up to relabeling, we can
assume that the set of Fine core normals consists of $a_1,...,a_k$ for
some $k \leq m$. This yields a set of inequalities defining the
affine hull of the Fine core of $P$,
$\operatorname{aff}(\finecore(P))$, that is,

\begin{align*}
  \operatorname{aff}\left( \finecore(P) \right)
  & = \left\{ x \st \langle a_i , x \rangle \ge b_i + \fineCD{P}^{-1}, 1
    \leq i \leq k \right\} \\
  & = \left\{ x \st \langle a_i , x \rangle = b_i + \fineCD{P}^{-1}, 1
    \leq i \leq k \right\} .
\end{align*}

We end this section with the remark below.

%\christian{this rk belongs into tools or preliminaries}
\begin{remark} \label{rmk: muofsublattice}
Let $\Lambda \subseteq \Lambda'$ be a sublattice of the lattice
$\Lambda '$ and let $P$ be a rational polytope with respect to the
lattice $\Lambda$, then if we denote by $\fineCD{P,\Lambda}$ the Fine
$\Q$-codegree of $P$ with respect to the lattice $\Lambda$, we have
that
\[
\fineCD{P,\Lambda} \geq \fineCD{P, \Lambda'}.
\]
\end{remark}

\section{Tools}
\label{sec: tools}

Before we explore the Fine spectrum in small dimensions and to some
extent in general dimension in \S\ref{sec: classification}, we develop
some general tools to facilitate the search.

\subsection{Project and Lift}
\label{sec:project-lift}
Now, we want to study the behaviour of the Fine $\Q$-codegree under
lattice preserving projections $\pi \colon \R^d \to \R^{d'}$.

\begin{remark} \cite[Remark 3.4]{mora2023fine} \label{rmk: anyprojection}
% Note that we have described the behaviour of the Fine $\Q$-codegree
% under the natural projection of $P$. However,
  Under any projection $\pi$ for any rational polytope $P \subset
  \R^d$ we have $\fineCD{\pi(P)} \leq \fineCD{P}$.
\end{remark}

But every $P$ comes with its own preferred projection.

\begin{definition}
Let $K(P)$ be the linear space parallel to
$\text{aff}(\finecore(P))$. Then the projection $\pi_P : \R^n \to
\R^n/K(P)$ is called the natural projection associated with $P$.
\end{definition}

It has the following nice properties.

\begin{prop}\cite[Proposition 3.3]{mora2023fine} \label{Prop:coreispt}
The image $Q := \pi_P(P)$ of the natural projection of $P$ is a
rational polytope satisfying $\fineCD{Q} = \fineCD{P}$. Moreover
$\finecore(Q)$ is the point $\pi_P(\finecore(P))$.
\end{prop}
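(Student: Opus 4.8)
The plan is to push the whole computation onto the quotient lattice and then exploit that the Fine core normals positively span the relevant dual space. First I would record the lattice bookkeeping: since $K(P)=\{a_1,\dots,a_k\}^\perp$ is a rational subspace (the $a_i$ are the integral Fine core normals cutting out $\operatorname{aff}(\finecore(P))$), the sublattice $\Z^d\cap K(P)$ is saturated, so $\Lambda':=\Z^d/(\Z^d\cap K(P))$ is a genuine lattice in $\R^d/K(P)$; with respect to it $Q=\pi_P(P)$ is a rational polytope and $\pi_P$ is lattice preserving. The dual lattice is canonically $(\Lambda')^*\cong W:=(\Z^d)^*\cap K(P)^\perp$, each $\alpha\in(\Lambda')^*$ lifting to $\tilde\alpha=\alpha\circ\pi_P$ with $h_Q(\alpha)=h_P(\tilde\alpha)$. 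Hence $\pi_P^{-1}(\fineadjoint{Q})$ is exactly the set cut out by the subfamily $W\setminus\{0\}\subseteq(\Z^d)^*\setminus\{0\}$ of the defining inequalities of $\fineadjoint{P}$, so $\pi_P(\fineadjoint{P})\subseteq\fineadjoint{Q}$ for every $s$. Writing $r:=\fineCD{P}^{-1}$ for the Fine number, and using that $\finecore(P)$ lies in $\operatorname{aff}(\finecore(P))=x_0+K(P)$, its image is a single point $q_0:=\pi_P(\finecore(P))$; applying the inclusion at $s=r$ gives $q_0\in\fineadjoint[r]{Q}$, so in particular $\fineCD{Q}^{-1}\ge r$.

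The crux, and the main obstacle, is to show that the Fine core normals positively span $K(P)^\perp$, i.e.\ $0\in\relint\conv\{a_1,\dots,a_k\}$. I would read this off the Fine mountain $M^F(P)\subseteq\R^{d+1}$. Its top face $F_{\mathrm{top}}=\finecore(P)\times\{r\}$ is the face maximizing the height functional $e_{d+1}=(0,\dots,0,1)$, and the facets of $M^F(P)$ containing $F_{\mathrm{top}}$ are precisely those given by the Fine core normals (a constraint $\langle a,x\rangle-s\ge h_P(a)$ is tight on all of $F_{\mathrm{top}}$ iff $a$ is a Fine core normal), with outer normals $(-a_i,1)$. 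Thus the outer normal cone of $F_{\mathrm{top}}$ is $\cone\{(-a_i,1)\}$, and it contains $e_{d+1}$ in its \emph{relative interior}, since $e_{d+1}$ attains its maximum on $M^F(P)$ exactly along $F_{\mathrm{top}}$. Because the relative interior of a finitely generated cone consists of the strictly positive combinations of its generators, $e_{d+1}=\sum_i\mu_i(-a_i,1)$ with all $\mu_i>0$; comparing coordinates gives $\sum_i\mu_i=1$ and $\sum_i\mu_i a_i=0$, i.e.\ $0\in\relint\conv\{a_i\}$.

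It remains to descend and finish. Set $d'=\dim\Span\{a_i\}=\operatorname{codim}K(P)$, so that the images $\bar a_i\in(\R^{d'})^*\cong K(P)^\perp$ already linearly span $(\R^{d'})^*$; the relation $0\in\relint\conv\{a_i\}$ then becomes $0\in\operatorname{int}\conv\{\bar a_i\}$, so the $\bar a_i$ positively span all of $(\R^{d'})^*$. For the codegree, note $\langle\bar a_i,q_0\rangle=h_Q(\bar a_i)+r$; any $y\in\fineadjoint{Q}$ with $s>r$ would satisfy $\langle\bar a_i,y-q_0\rangle\ge s-r>0$ for all $i$, contradicting $0\in\conv\{\bar a_i\}$ by Gordan's theorem of the alternative. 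Hence $\fineadjoint{Q}=\emptyset$ for $s>r$, giving $\fineCD{Q}^{-1}\le r$ and therefore $\fineCD{Q}=\fineCD{P}$. For the core, $\finecore(Q)=\fineadjoint[r]{Q}$, and any $y$ in it yields $v=y-q_0$ with $\langle\bar a_i,v\rangle\ge0$ for all $i$; since the $\bar a_i$ positively span $(\R^{d'})^*$, the dual cone of these inequalities is $\{0\}$, so $v=0$ and $\finecore(Q)=\{q_0\}=\pi_P(\finecore(P))$, completing the proof.
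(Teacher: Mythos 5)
Your argument is correct, and it is worth noting up front that this paper does not actually prove the statement: Proposition~\ref{Prop:coreispt} is imported verbatim from \cite[Proposition 3.3]{mora2023fine}, so there is no in-text proof to compare against. Judged on its own, your proof is complete and well organized. The lattice bookkeeping in the first paragraph is right (saturation of $\Z^d \cap K(P)$, the identification of $(\Lambda')^*$ with $(\Z^d)^* \cap K(P)^\perp$, and $h_Q(\alpha) = h_P(\tilde\alpha)$), and it correctly reduces the easy inequality $\fineCD{Q} \le \fineCD{P}$ to the observation that $\pi_P^{-1}(\fineadjoint{Q})$ is cut out by a subfamily of the inequalities defining $\fineadjoint{P}$ --- in effect reproving Remark~\ref{rmk: anyprojection} in this special case. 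The crux, that the Fine core normals carry a strictly positive linear relation $\sum_i \mu_i a_i = 0$ spanning $K(P)^\perp$, is exactly the structural fact that makes the natural projection codegree-preserving, and your derivation of it from the Fine mountain is clean: $e_{d+1}$ is maximized over $M^F(P)$ exactly on $F_{\mathrm{top}} = \finecore(P) \times \{n^F(P)\}$, hence lies in the relative interior of the outer normal cone of that face, and the relative interior of a finitely generated cone consists of the strictly positive combinations of its generators. The descent to the quotient and the two applications of positive spanning (emptiness of $\fineadjoint{Q}$ for $s > n^F(P)$, and $\{v \st \langle \bar a_i, v\rangle \ge 0 \ \forall i\} = \{0\}$ to pin down the core as a single point) are both correct.

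Two small points deserve tightening, though neither is a gap. First, your parenthetical claim that the \emph{facets} of $M^F(P)$ through $F_{\mathrm{top}}$ are precisely those given by the Fine core normals is slightly off as stated --- a Fine core normal in the description need not define a facet --- but what you actually use is the standard fact that the outer normal cone of a face is generated by the normals of \emph{all constraints active on that face} (in any valid finite description, redundant or not), which is exactly right; you should phrase it that way. Second, you implicitly use that $M^F(P)$ is a polytope with a finite inequality description and that $n^F(P) > 0$ (so the constraint $s \ge 0$ is inactive on $F_{\mathrm{top}}$); both are available here --- finiteness via Theorem~\ref{cor: relevantineqs} and the mountain description in the preliminaries, and positivity because every relevant inequality has strictly positive slack at an interior point of $P$ --- but a one-line justification would make the proof self-contained. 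Your use of the identity $\operatorname{aff}(\finecore(P)) = \{x \st \langle a_i, x\rangle = b_i + \fineCD{P}^{-1},\ 1 \le i \le k\}$ is legitimate since the paper states it in \S\ref{sec: finespectrum}; note that it also follows from your own normal-cone setup, since the affine hull of a face is the intersection of the hyperplanes of its active constraints.
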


Using the natural projection, we can relate the Fine spectra in
different dimensions.

\begin{lemma}
  For $d \in \Z_{>1}$, 
  $\fineSpec{d-1} \subset \fineSpec{d}$, and
  $$
  \fineSpec{d} \setminus \fineSpec{d-1} \subset \left\{ \fineCD{P} \st
    P \text{ a lattice $d$-polytope with } \finecore(P)
    \text{ a point} \right\}\,.
  $$
\end{lemma}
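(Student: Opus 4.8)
The plan is to prove the two inclusions separately, in each case transporting a value of the Fine $\Q$-codegree between consecutive dimensions by an operation that leaves the value unchanged.

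For the first inclusion $\fineSpec{d-1} \subset \fineSpec{d}$, I would start from a lattice $(d-1)$-polytope $P'$ and build the prism $Q := P' \times [0,N] \subset \R^d$ over a sufficiently tall integer segment; this is a lattice $d$-polytope, so it suffices to show $\fineCD{Q} = \fineCD{P'}$. One inequality is free: the coordinate projection $\pi \colon \R^d \to \R^{d-1}$ forgetting the last coordinate sends $Q$ to $P'$, so Remark~\ref{rmk: anyprojection} gives $\fineCD{P'} \leq \fineCD{Q}$. For the reverse inequality I would set $s := \fineCD{P'}^{-1} = n^F(P')$, pick $y' \in \finecore(P') = \fineadjoint[s]{P'}$, and check that $(y', N/2) \in \fineadjoint[s]{Q}$ whenever $N \geq 2s$. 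Using the factorization $h_Q(\alpha', \alpha_d) = h_{P'}(\alpha') + \min(0, N\alpha_d)$, the defining inequalities of $\fineadjoint[s]{Q}$ split according to the sign of $\alpha_d$ and reduce, case by case, either to the inequalities already satisfied by $y'$ in $\fineadjoint[s]{P'}$ or to the single condition $N/2 \geq s$. Hence $\fineadjoint[s]{Q} \neq \emptyset$, i.e.\ $n^F(Q) \geq s$, which is exactly $\fineCD{Q} \leq \fineCD{P'}$; combining the two inequalities yields equality.

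For the second inclusion I would fix $\mu \in \fineSpec{d} \setminus \fineSpec{d-1}$, write $\mu = \fineCD{P}$ for a lattice $d$-polytope $P$, and argue that $\finecore(P)$ must then be a point (so that $P$ itself witnesses membership in the target set). Suppose instead $e := \dim \finecore(P) \geq 1$, and apply the natural projection $\pi_P \colon \R^d \to \R^d/K(P)$. By Proposition~\ref{Prop:coreispt}, $Q := \pi_P(P)$ has $\fineCD{Q} = \mu$ and $\finecore(Q)$ a point, and since $P$ is full-dimensional with $\dim K(P) = e$, we have $\dim Q = d - e$.

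The step I expect to require the most care is verifying that $Q$ is a genuine \emph{lattice} polytope of dimension $d-e \leq d-1$, rather than merely the rational polytope guaranteed by Proposition~\ref{Prop:coreispt}. The key point is that $K(P)$, being parallel to $\operatorname{aff}(\finecore(P))$ with $\finecore(P)$ rational, is a rational linear subspace, so $\Z^d \cap K(P)$ is saturated and $\pi_P(\Z^d)$ is a lattice of full rank $d-e$ in $\R^d/K(P)$; as every vertex of $Q$ is the image of a vertex of $P \subset \Z^d$, the vertices of $Q$ lie in $\pi_P(\Z^d)$. I would also record that necessarily $e \leq d-1$, i.e.\ $\dim Q \geq 1$: if $\finecore(P)$ were full-dimensional, an interior point would satisfy all of the finitely many relevant inequalities (Theorem~\ref{cor: relevantineqs}) strictly, allowing $s$ to be raised past $n^F(P)$ and contradicting its maximality. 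Granting this, $\mu = \fineCD{Q} \in \fineSpec{d-e}$, and iterating the first inclusion gives $\fineSpec{d-e} \subseteq \fineSpec{d-1}$, so $\mu \in \fineSpec{d-1}$ — contradicting the choice of $\mu$. Therefore $\finecore(P)$ is a point and $\mu$ lies in the claimed set.
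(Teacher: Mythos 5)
Your proof is correct and follows essentially the same route as the paper, which proves the first inclusion via $\fineCD{P' \times [0,h]} = \fineCD{P'}$ for $h \gg 0$ and the second as a direct consequence of Proposition~\ref{Prop:coreispt}. Your extra verifications --- the case analysis on $(\alpha', \alpha_d)$ for the prism, and the check that the natural projection image is a \emph{lattice} polytope with respect to $\pi_P(\Z^d)$ because $K(P)$ is rational and vertices of $\pi_P(P)$ come from vertices of $P$ --- are exactly the details the paper's two-sentence proof leaves implicit, and they are carried out correctly.
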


\begin{proof}
  For the first inclusion, observe that $\fineCD{P \times [0,h]} =
  \fineCD{P}$ for $h \gg 0$.

  The second inclusion is a restatement of Proposition~\ref{Prop:coreispt}.
\end{proof}

\subsection{Computational approaches}
\label{sec:computation}
As we began this project, we started by computing the Fine
$\Q$-codegrees of all lattice $d$-polytopes for $d \le 5$ of small volume as classified
by Balletti~\cite{Bal18}. We summarize our findings below.

Then, in \S\ref{sec:MILPs}, we present a family of mixed integer
linear programs to explore the Fine spectrum.

\subsubsection{Results from polytope database}
Gabriele Balletti compiled a database of all lattice polytopes, up 
to isomorphism, of dimension at most five and of small\footnote{What 
volume is ``small'' depends on the dimension as in the second column 
of Table~\ref{fig:tablespec}.} volume~\cite{Bal18}. 

For each of these polytopes, we computed the Fine $\Q-$codegree.
The main idea of our algorithm is that, given $d \in
\Z_{>0}$ and a polytope $P$ of dimension $d$, we obtain the relevant
valid inequalities for $P$ via Corollary \ref{cor: relevantineqs} and
in this way we construct the Fine mountain for $P$. The Fine
$\Q$-codegree can be obtained then, as being the height of this Fine
mountain. Moreover, some other data from the polytopes can be obtained
via this algorithm, such as the dimension of the Fine core of $P$
which corresponds to the dimension of the topmost face of the Fine
mountain of $P$ or the Fine core vertices, corresponding to the
vertices of this topmost face of the Fine mountain.

The code for computing the Fine mountain, the Fine $\Q$-codegree, and
the dimension of the Fine core has been implemented using
\texttt{Polymake}\cite{gawrilow2000polymake, assarf2017computing}
software for polytopes. Following these
computations, we have observed repeating behaviors which emerge in
various dimensions for the Fine $\Q-$codegree.

\begin{table}[H]
\begin{center}
\begin{tabular}{ |c|c|c| } 
 \hline
 Dimension & Max. Volume & Fine $\Q$-codegree spectrum \\
 \hline
 &&\\[-0.5em]
 1 & $\infty$ & $2, 1, \frac{2}{3}, \frac{1}{2}, \frac{2}{5},
                \frac{1}{3}, \frac{2}{7}, \frac{1}{4},...$  \\[2ex] 
 \hline
 &&\\[-0.5em]
 2 & 50 & $3, 2, \frac{3}{2}, 1, \frac{3}{4}, \frac{2}{3},
          \frac{3}{5}, \frac{1}{2}, \frac{3}{7}, \frac{2}{5},
          \frac{3}{8}, \frac{1}{3}, \frac{3}{10}, \frac{2}{7}$
  \\[2ex] 
 \hline
 &&\\[-0.5em]
 3 & 23 & $4, 3, \frac{5}{2}, 2, \frac{3}{2}, \frac{4}{3},
          \frac{5}{4}, \frac{7}{6}, 1, \frac{5}{6}, \frac{4}{5},
          \frac{3}{4}$ \\ [2ex]
 \hline
 &&\\[-0.5em]
 4 & 14 & $5, 4, \frac{7}{2}, 3, \frac{5}{2}, \frac{7}{3},
          \frac{9}{4}, 2, \frac{7}{4}, \frac{5}{3}, \frac{3}{2},
          \frac{4}{3}, 1$ \\[2ex]
 \hline
 &&\\[-0.5em]
 5 & 14 & $6, 5, \frac{9}{2}, 4, \frac{7}{2}, \frac{10}{3},
          \frac{13}{4}, 3, \frac{11}{4}, \frac{8}{3}, \frac{5}{2},
          \frac{7}{3}, \frac{9}{4}, 2, \frac{7}{4}, \frac{5}{3},
          \frac{3}{2}, 1$\\[2ex]
 \hline
\end{tabular}
\end{center} 
    \caption{Fine $\Q$-codegree spectra}
    \label{fig:tablespec}
\end{table}

One of the first observations from this table, which also holds in higher
dimensions (cf.~Theorem~\ref{thm: general d}) is that the Fine $\Q$-codegree of a $d$-dimensional polytope $P$ is at most $d+1$ with equality only for the
standard $d$-dimensional simplex, and that $\fineCD{P}$ will not take any values between $d$ and $d+1$.

\subsubsection{A family of MILPs}
\label{sec:MILPs}
%\christian{need to clean up and shorten this section}
Recalling the proof of Theorem \ref{thm:finitespectrum} we deduce a way of exploring the Fine spectrum. First of all, for a fixed dimension $d$, we need to determine all possible different configurations of Fine core normals. Note that these correspond to lattice polytopes whose only interior lattice point is the origin, for which there are only finitely many possibilities \cite[Theorem 1]{LZ91}. We then fix a set of Fine core normals and proceed to find all possible polytopes with such a Fine core normal configuration that give rise to different $\mu^F$ values. As we shall see, this corresponds to solving a Mixed Integer Linear Program.

It is worth remarking that, in what follows, we can consider only polytopes which have a zero-dimensional Fine core using the arguments from \S\ref{sec:project-lift}.

%\christian{reformulate how to explore the spectrum by varying $L,U,$ max/min}
Fix a dimension $d$ and a configuration $\mathcal{A}$ of Fine core normals $a_1,...,a_n \in (\R^d)^*$. We want to construct a polytope $P$ with zero-dimensional Fine core, namely $\finecore(P) = \{p\}$, with vertices $u_1,...,u_n \in \Z^d$, that satisfies the property of having these Fine core normals. Let us take as variables the $u_i$ for all $1 \leq i \leq n$, $p \in \Q^d$ and $n^F := \frac{1}{\fineCD{P}} \in \Q$ the \textit{Fine number} of $P$. Then the Mixed Integer Linear Program can be stated as: 
%Fix $\varepsilon > 0$, then

\begin{equation} \label{eq: milp} \tag{MILP$^\pm_A(L,U)$}
\begin{array}{ll@{}ll}
\text{maximize} & \pm n^F &\\
\text{subject to}&  \langle a_i, u_j \rangle & \geq  \langle a_i , u_i \rangle & \text{ for all } i \neq j \\
                 &   \langle a_i, p \rangle  & = \langle a_i , u_i \rangle + n^F &  \text{ for all } i \\
                 &   n^F & \in [L,U] & \\
                 & p & \in \Q^d \cap [0,1]^d  \\
                 & u_1,\dots,u_n & \in \Z^d. \\
                 
\end{array}
\end{equation}

Solving this MILP will return vectors $u_1,\dots,u_n \in \Z^d$ whose convex hull defines a polytope with a Fine core normal configuration given by $a_1,\dots,a_n \in (\R^d)^*$ and with $L \leq n^F \leq U$, where $L$ and $U$ are some lower and upper bounds for the Fine number. It is possible to explore the Fine spectrum by maximizing $n^F$ and varying $U$ or maximizing $-n^F$ and varying $L$, thus achieving a complete list of all values for $\mu^F$ by varying these two parameters consistently. It is not immediately clear how small the variations of $L$ and $U$ should be in general to guarantee a complete scanning of the Fine spectrum.

%When solving this MILP, for this fixed Fine core normal configuration, we will then be able to obtain the minimum value that the Fine $\Q$-codegree can take when we restrict $\fineCD{P} \geq \varepsilon$ for each such $\varepsilon$, meaning that by fixing $\varepsilon$ differently each time, we have a way to compute all possible values of the Fine $\Q$-codegree directly. 

This characterization of the values $\fineCD{P}$ requires a classification of lattice polytopes whose only interior lattice point is the origin. In fact, a classification of inclusion minimal such polytopes is sufficient, see \ref{rmk: muofsublattice}. In the next section, we will study the cases of lower dimensions and introduce some results that hold in a general dimension $d$. However, to make our endeavor more manageable, we will first discuss some interesting results that restrict our spectrum values in any dimension $d$.

\subsection{Reducing the search space}
\label{sec:circuits-pyramids-determinants}

%% positive circuits
%%%%%%%%%%%%%%%%%%%%

\begin{lemma} \label{lemma: dimdcircuits}
  Let $P$ be a lattice polytope of dimension $d$, and
  suppose there are Fine core normals $a_0,...,a_\ell$ and
  coefficients $\lambda_0, \ldots, \lambda_\ell \in \R_{>0}$ so 
  that $\sum_{i=0}^\ell \lambda_i a_i =0$, then there is a lattice
  polytope $Q$ of dimension $\le \ell$ such that
  \[
    \fineCD{P} = \fineCD{Q}.
  \]
\end{lemma}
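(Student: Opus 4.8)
The plan is to realize $Q$ as a lattice projection of $P$ determined precisely by the functionals occurring in the dependence, and then to exploit the strict positivity of the $\lambda_i$ to bound the height of the projected Fine mountain from above.

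First I would set $W := \Span(a_0, \dots, a_\ell) \subseteq (\R^d)^*$. The nontrivial relation $\sum_{i=0}^\ell \lambda_i a_i = 0$ forces $\dim W \le \ell$, so $W^\perp := \{x \in \R^d \st \langle a_i, x\rangle = 0 \text{ for all } i\}$ has dimension $d - \dim W$. Because $W$ is spanned by integral functionals, $\Z^d \cap W^\perp$ is saturated, hence $\pi \colon \R^d \to \R^d/W^\perp$ is a lattice-preserving projection onto a space of dimension $\dim W \le \ell$; I set $Q := \pi(P)$, a lattice polytope with $\dim Q \le \ell$. Each $a_i$ factors as $a_i = \pi^*(\bar a_i)$ for a unique $\bar a_i$ in the dual lattice of the target (the dual map $\pi^*$ is injective with image $W$), and $h_P(a_i) = h_Q(\bar a_i)$ since $a_i$ is constant on the fibres of $\pi$.

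By Remark~\ref{rmk: anyprojection} we get $\fineCD{Q} \le \fineCD{P}$, equivalently $n^F(Q) \ge n^F(P)$, and it remains to prove the reverse. Write $s^* := 1/\fineCD{P}$ and pick $y^* \in \finecore(P) = \fineadjoint[s^*]{P}$. Summing the defining equalities $\langle a_i, y^*\rangle = h_P(a_i) + s^*$ of the Fine core normals against the weights $\lambda_i$ and using $\sum_i \lambda_i a_i = 0$ yields the identity $0 = \sum_i \lambda_i h_P(a_i) + s^* \sum_i \lambda_i$. Now for any $s$ with $\fineadjoint[s]{Q} \neq \emptyset$, take $z \in \fineadjoint[s]{Q}$; since each $\bar a_i$ lies in the target dual lattice, the inequalities $\langle \bar a_i, z\rangle \ge h_Q(\bar a_i) + s = h_P(a_i) + s$ are among those defining $\fineadjoint[s]{Q}$, and combining them with the weights $\lambda_i > 0$ (using $\sum_i \lambda_i \bar a_i = 0$, the image of the relation under the injective $\pi^*$) gives $0 \ge \sum_i \lambda_i h_P(a_i) + s \sum_i \lambda_i$. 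Comparing with the core identity forces $s \le s^*$, hence $n^F(Q) \le s^* = n^F(P)$. Together with the previous inequality this gives $n^F(Q) = n^F(P)$, that is $\fineCD{Q} = \fineCD{P}$.

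The only genuinely delicate point is the reverse inequality, and it is exactly there that the hypothesis $\lambda_i > 0$ is indispensable: taking a strictly positive combination of the adjoint inequalities preserves their direction and makes the $s$-terms accumulate with total weight $\sum_i \lambda_i > 0$, which is what converts the positive circuit into the sharp bound $s \le s^*$ (a projection with a sign change among the $\lambda_i$ could only yield the automatic estimate of Remark~\ref{rmk: anyprojection}). I would also verify the two routine points that $\pi$ is genuinely lattice-preserving (saturation of $\Z^d \cap W^\perp$) and that each $\bar a_i$ lies in the dual lattice of the target so that its inequality actually appears among those cutting out $\fineadjoint[s]{Q}$; both follow immediately from $a_i \in (\Z^d)^*$.
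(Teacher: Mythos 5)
Your proposal is correct and takes essentially the same route as the paper: you project along $\Span(a_0,\dots,a_\ell)$ (the paper uses the equivalent map $\pi=(a_1,\dots,a_\ell)\colon\R^d\to\R^\ell$ with the lattice $\pi(\Z^d)$, whose kernel is your $W^\perp$), invoke Remark~\ref{rmk: anyprojection} for one inequality, and use the positive relation $\sum_i\lambda_i a_i=0$ together with the core equalities to certify that $\fineadjoint{Q}=\vac$ for $s>1/\fineCD{P}$. The only difference is cosmetic: the paper states this as an infeasibility certificate, whereas you write out the weighted sum explicitly.
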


\begin{proof}
  For the conclusion we can safely assume $\ell < d$.

  Look at the projection
  $\pi = (a_1, \ldots, a_\ell) \colon \R^d \to \R^\ell$. It takes $P$
  to $Q \coloneq \pi(P)$, considered as a lattice polytope in the
  sublattice $\Lambda \coloneq \pi(\Z^d) \subseteq \Z^\ell$.
  Our claim is that this projection is Fine $\Q$-codegree preserving,
  i.e., $\fineCD{P} = \fineCD{Q}$.
%  and $\pi(x) \in \finecore(\pi(P))$.

  First, observe that $a_0, \ldots, a_\ell$ descend to well-defined
  elements of $\Lambda^\vee$ as $\ker \pi \subseteq \ker a_i$ for
  $i=0, \ldots, \ell$.

  Next, recall from Remark \ref{rmk: anyprojection} that $\fineCD{\pi(P)} \leq \fineCD{P}$
  for any projection. So we just need to verify that the reverse
  inequality holds.

  There are $v_0,...,v_\ell \in P \cap \Z^d$ and $b_0,...,b_\ell \in
  \Z$ such that for $x \in \relint(\finecore(P))$, we have
  \[
    b_i = h_P(a_i) = \langle a_i, v_i \rangle = \langle a_i , x \rangle -
    \frac{1}{\fineCD{P}}
  \]
  for all $0 \leq i \leq \ell$.
  The relation $\sum_{i=0}^\ell \lambda_i a_i =0$ yields a certificate
  that for every $s > 1/\fineCD{P}$, the system of inequalities
  \[
    \< a_i, y \> \ge b_i + s \quad (i=0, \ldots, \ell)
  \]
  for $y \in \R^d$ is infeasible.
  As mentioned above, these inequalities descend to valid inequalities
  for $Q$. This shows $\fineadjoint{Q} = \vac$ for all $s >
  1/\fineCD{P}$, in other words $\fineCD{Q} \ge \fineCD{P}$.
\end{proof}

Now we show that it is enough to consider Fine core normal configurations given by the vertices of lattice polytopes with the origin as the only interior lattice point, since any possible sub-configuration is already accounted for in this classification.

\begin{lemma} \label{lemma:enoughWithVertices}
Let $v_1,...,v_k$ be Fine core normals of a polytope $P$. Let $v = \sum_{i=1}^k \lambda_i v_i \in \Z^d$ where $\sum_{i=1}^k \lambda_i \leq 1$ and $v \neq 0$. Then $v$ is also a Fine core normal of $P$.
\end{lemma}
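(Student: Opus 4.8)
The plan is to verify the defining property of a Fine core normal directly for $v$: I must show that $\langle v, y \rangle = h_P(v) + \fineCD{P}^{-1}$ holds for every $y \in \finecore(P)$. So fix such a $y$. Since each $v_i$ is a Fine core normal, $\langle v_i, y \rangle = h_P(v_i) + \fineCD{P}^{-1}$, and hence by linearity
\[
  \langle v, y \rangle = \sum_{i=1}^k \lambda_i \langle v_i, y \rangle = \sum_{i=1}^k \lambda_i h_P(v_i) + \fineCD{P}^{-1} \sum_{i=1}^k \lambda_i =: C .
\]
In particular $\langle v, \cdot\rangle$ is already constant on $\finecore(P)$, independent of the choice of $y$; the remaining task is to identify the constant $C$ with $h_P(v) + \fineCD{P}^{-1}$.

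For this I would sandwich $C$ between two bounds. For the lower bound, the hypotheses guarantee $v \in (\Z^d)^* \setminus \{0\}$ (this is exactly why the statement requires $v = \sum_i \lambda_i v_i$ to land in the lattice and to be nonzero), so that the defining inequalities of the Fine core (i.e.\ of the Fine adjoint polytope at level $\fineCD{P}^{-1}$) apply to $y$ and yield $\langle v, y \rangle \ge h_P(v) + \fineCD{P}^{-1}$, that is $C \ge h_P(v) + \fineCD{P}^{-1}$. For the upper bound I would use superadditivity of the support function, valid since the $\lambda_i \ge 0$:
\[
  h_P(v) = \min_{x \in P} \sum_{i=1}^k \lambda_i \langle v_i, x \rangle \ge \sum_{i=1}^k \lambda_i \min_{x \in P}\langle v_i, x\rangle = \sum_{i=1}^k \lambda_i h_P(v_i),
\]
together with $\fineCD{P}^{-1}\sum_i \lambda_i \le \fineCD{P}^{-1}$, which holds because $\fineCD{P}^{-1} > 0$ and $\sum_i \lambda_i \le 1$. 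Adding these two estimates gives $C \le h_P(v) + \fineCD{P}^{-1}$. The two bounds force $C = h_P(v) + \fineCD{P}^{-1}$, which is precisely the Fine core normal condition for $v$; as $y$ was arbitrary, this completes the argument.

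I expect the only genuine subtlety to be the hypothesis $\sum_i \lambda_i \le 1$ rather than $\sum_i \lambda_i = 1$: a naive linearity argument computes the constant as $\sum_i \lambda_i h_P(v_i) + \fineCD{P}^{-1}\sum_i\lambda_i$, which does not visibly equal $h_P(v) + \fineCD{P}^{-1}$ when the coefficients sum to strictly less than one. The resolution is that the two sources of slack — the gap $h_P(v) - \sum_i \lambda_i h_P(v_i) \ge 0$ from superadditivity and the gap $\fineCD{P}^{-1}(1 - \sum_i\lambda_i) \ge 0$ from $\sum_i \lambda_i \le 1$ — both point the same way, so the core-membership lower bound squeezes both to zero at once. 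I would also flag that the argument tacitly uses $\lambda_i \ge 0$ (so that $v$ lies in $\conv(0, v_1, \dots, v_k)$), which is exactly what the superadditivity step needs; without nonnegativity the upper bound can fail.
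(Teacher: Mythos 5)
Your proof is correct and is essentially the paper's own argument: the same ingredients appear in the same roles --- linearity of $\langle \cdot, y\rangle$ on the Fine core, superadditivity $h_P(v) \ge \sum_{i} \lambda_i h_P(v_i)$, the core-membership inequality $\langle v, y \rangle \ge h_P(v) + \fineCD{P}^{-1}$, and the bound $\sum_i \lambda_i \le 1$ --- with the only difference being presentational, as the paper strings them into a single chain of inequalities squeezing $\langle v, x\rangle - h_P(v)$ down to $n^F(P)$ rather than sandwiching your constant $C$. Your remark that nonnegativity of the $\lambda_i$ is tacitly required applies equally to the paper's proof, which uses it in exactly the same superadditivity step.
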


\begin{proof}

Let $b_i = \min_{p \in P} \langle v_i , p \rangle$ and let $x \in \operatorname{relint}(\finecore(P))$, then we have by definition $\langle v_i, x \rangle = b_i + n^F(P)$. Denote $b := \min_{p \in P} \langle v,p \rangle$ and since $v = \sum_{i=1}^k \lambda_i v_i$, we have that 
\[
b \geq \sum_{i=1}^k \lambda_i b_i.
\]
Moreover, also by the definition of $n^F(P)$ we have
\[
\langle v, x \rangle \geq b + n^F(P).
\]
In order to show that $v$ is a Fine core normal of $P$, we want to show that the above inequality is indeed an equality. But
\[
n^F(P) \leq \langle v ,x \rangle - b \leq \langle v, x \rangle - \sum_{i=1}^k \lambda_i b_i = \sum_{i=1}^k \lambda_i \left( \langle v_i, x \rangle - b_i \right) = n^F(P) \sum_{i=1}^k \lambda_i \leq n^F(P),
\]
where the first inequality follows from the definition of $n^F(P)$, the second one by the relation between $b$ and the $b_i$, and the last one since by assumption $\sum_{i=1}^k \lambda_i \leq 1$. Thus, we obtain $\langle v, x\rangle = b+n^F(P)$ as desired.
\end{proof}

In order to further reduce our database, we want to show that the set of numerators of the Fine $\Q$-codegree is finite. We first define this set.

\begin{definition} \label{def: IdNumerators}
For a fixed dimension $d$, let $\numerators \subset
\Z_{>0}$ be the set of all possible numerators of the rational number $\fineCD{P}$, where $P$ is a lattice $d$-polytope.
\end{definition}

The theorem below provides us with a way of computing the elements of the set $\numerators$.

%\sofia{Prove the other direction by saying: Given a determinant is there a polytope that realizes it as numerator? There is an algorithm which realizes a polytope with a given determinant as numerator, coming from Euclidean algorithm in more than 2 variables, say this can be generalized as a general proof.}
%\sofia{Nabla of 1 is the entire volume of the simplex and det(Aj) is the volume of the polytope without vector j.}
\begin{theorem} \label{thm: FiniteNumerators} 
Fix a dimension $d \in \Z_{>0}$. Consider the set $\mathcal{F}_d$ of $d$-dimensional Fine core normal configurations and for any such configuration $\mathcal{A}$, choose a set $a_1,\dots,a_{d+1} \subseteq \mathcal{A}$ of vectors which are positively spanning. To such a choice, we associate a matrix $A$ whose $i$-th row is $a_i$ for $i=1,...,d+1$ and a number 
\[
\eta_{A} := \frac{\det \begin{bmatrix} A & - \mathbb{1} \end{bmatrix}}{D}
\]
where $D := \gcd_{j=1,\dots,d+1}\{ \det \begin{bmatrix} A_j \end{bmatrix} \}$ and $A_j$ is the matrix $A$ with the $j$-th row removed. Then
\[
\numerators = \mathcal{I}_{d-1} \cup \{\eta_{A} \st  a_1,\dots,a_{d+1} \subset \mathcal{A} \text{ positively spanning}, \mathcal{A} \in \mathcal{F}_d \}.
\]
%Then $\numerators$ is the union of the set of numbers obtainable in this fashion with $\mathcal{I}_{d-1}$. 
%\sofia{Find better formulation.}

%If $D = \det \begin{bmatrix} A & - \mathbb{1} \end{bmatrix}$, then $D \in \numerators$. In particular, the set $\numerators \subset \Z_{>0}$ is finite. 
\end{theorem}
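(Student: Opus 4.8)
The plan is to prove the set equality by a double inclusion, reducing in both directions to the case of a lattice $d$-polytope whose Fine core is a single point. The first reduction is the natural projection: if $\finecore(P)$ has positive dimension, then $K(P)$ is a rational subspace (it is the common kernel of the integral Fine core normals), so $\pi_P$ is lattice preserving and by Proposition~\ref{Prop:coreispt} it carries $P$ to a lattice polytope $Q$ in the image lattice, of dimension $\le d-1$ and with $\fineCD{Q}=\fineCD{P}$, hence the same numerator. Since the product construction $Q\mapsto Q\times[0,h]$ preserves $\fineCD{}$, one has $\fineSpec{d'}\subseteq\fineSpec{d-1}$ and therefore $\numerators[d']\subseteq\numerators[d-1]$ for all $d'\le d-1$; this simultaneously shows $\numerators[d-1]\subseteq\numerators$ (the easy half of $\supseteq$) and lets us restrict attention to polytopes with $\finecore(P)=\{p\}$.

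For the inclusion $\numerators \subseteq \numerators[d-1]\cup\{\eta_A\}$, assume $\finecore(P)=\{p\}$. The Fine core normals are exactly the inequalities active at $p$; as $\{p\}$ is a bounded zero-dimensional face they positively span $(\R^d)^*$, so they contain a minimal positively dependent subset (a positive circuit) $a_0,\dots,a_\ell$ with $\ell+1\le d+1$. If $\ell<d$, Lemma~\ref{lemma: dimdcircuits} produces a lattice polytope of dimension $\le\ell\le d-1$ with the same $\fineCD{}$, so the numerator lies in $\numerators[d-1]$. The new values arise when $\ell=d$: here I would first note that a positive circuit of size $d+1$ must have rank $d$ (a positive circuit spanning a space of dimension $r$ has at most $r+1$ elements), so any $d$ of the $a_i$ are independent and $\det[A\mid-\mathbb{1}]\ne 0$. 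Writing $b_i=h_P(a_i)\in\Z$, the Fine-core equations $\langle a_i,p\rangle=b_i+n^F(P)$ read $[A\mid-\mathbb{1}](p,n^F)^\top=b$, and Cramer's rule gives $\fineCD{P}=\det[A\mid-\mathbb{1}]/\det[A\mid b]$. Expanding each determinant along its last column shows both are integer combinations of the maximal minors $\det A_j$, hence divisible by $D$; putting $\det[A\mid b]=Dc$ yields $\fineCD{P}=\eta_A/c$ with $\eta_A,c\in\Z$.

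It remains to pass from $\fineCD{P}=\eta_A/c$ to membership in the stated set. When $\gcd(\eta_A,c)=1$ the reduced numerator is exactly $\eta_A$, which is one of the listed values. The delicate case is $\gcd(\eta_A,c)=g>1$, where the reduced numerator is the proper divisor $\eta_A/g$; here one must show $\eta_A/g\in\numerators[d-1]$. (For $d=1$ this is visible: the segments $[0,h]$ all have the single circuit configuration $\{\pm1\}$ with $\eta_A=2$, yet produce numerators $2$ and $1$ as $h$ varies, the value $1$ being the lower-dimensional one.) I would treat this by induction on $d$, arguing that a common factor $g$ forces a lattice-refinement witness of smaller effective dimension via Remark~\ref{rmk: muofsublattice} and Lemma~\ref{lemma:enoughWithVertices}. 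This divisor bookkeeping, rather than the determinant computation, is the part I expect to require the most care.

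Finally, for $\supseteq$ I would realize each $\eta_A$ as a genuine numerator. Given $\mathcal A\in\mathcal F_d$ and a positively spanning $(d+1)$-subset $a_1,\dots,a_{d+1}$, Bézout's identity (since $D=\gcd_j\det A_j$) supplies integers $b_1,\dots,b_{d+1}$ with $\det[A\mid b]=D$, so the unique solution of $[A\mid-\mathbb{1}](p,n^F)^\top=b$ has $\fineCD{}=\eta_A/1=\eta_A$. The real content is producing an honest lattice $d$-polytope whose relevant inequalities realize precisely these $(a_i,b_i)$ with $\{p\}$ as true Fine core, i.e. feasibility of \eqref{eq: milp}: I would take $P$ to be the intersection of the halfspaces $\langle a_i,x\rangle\ge b_i$ together with auxiliary bounding facets kept irrelevant by Theorem~\ref{cor: relevantineqs}, passing to a sublattice if needed to force lattice vertices, with Lemma~\ref{lemma:enoughWithVertices} guaranteeing that the remaining normals of $\mathcal A$ stay Fine core normals so that the configuration is exactly $\mathcal A$. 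Verifying that such $P$ exists and that $p$ is its actual Fine core (and not a point of larger $\fineCD{}$) is the crux of this direction and, together with the divisor bookkeeping above, the main obstacle of the proof.
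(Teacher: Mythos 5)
Your proposal follows essentially the same route as the paper's proof: reduce via the project-and-lift arguments of \S\ref{sec:project-lift} to polytopes with $\finecore(P)=\{p\}$ (which also gives $\mathcal{I}_{d-1}\subseteq\numerators$), write the Fine core conditions as the square system $\begin{bmatrix} A & -\mathbb{1}\end{bmatrix}(p,n^F)^{\top}=b$ with $b_i=h_P(a_i)\in\Z$, solve by Cramer's rule to obtain $\fineCD{P}=\det\begin{bmatrix} A & -\mathbb{1}\end{bmatrix}/\det\begin{bmatrix} A & b\end{bmatrix}$, and realize the extremal value by an appropriate choice of $b$. On two points you are in fact more precise than the printed proof: you derive the existence of a full positively spanning $(d+1)$-subset from the circuit dichotomy (any positive circuit of size at most $d$ hands the value to $\mathcal{I}_{d-1}$ via Lemma~\ref{lemma: dimdcircuits}, which the paper uses only implicitly), and you carry out the divisibility bookkeeping --- Laplace expansion along the last column shows $D\mid\det\begin{bmatrix} A & -\mathbb{1}\end{bmatrix}$ and $D\mid\det\begin{bmatrix} A & b\end{bmatrix}$, and B\'ezout produces $b\in\Z^{d+1}$ with $\det\begin{bmatrix} A & b\end{bmatrix}=D$ --- which is what actually ties the argument to the normalized quantity $\eta_A$; the paper's written proof works with $\det\begin{bmatrix} A & -\mathbb{1}\end{bmatrix}$ itself and never engages $D$.

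The two steps you flag as open are, however, genuinely open in your write-up. First, the proper-divisor case: when $\det\begin{bmatrix} A & b\end{bmatrix}=Dc$ with $g=\gcd(\eta_A,c)>1$, the reduced numerator is $\eta_A/g$, and your plan of attack (induction on $d$ via Remark~\ref{rmk: muofsublattice} and Lemma~\ref{lemma:enoughWithVertices}) is a gesture, not an argument: neither statement produces a lower-dimensional polytope or an alternative configuration realizing $\eta_A/g$, and your $d=1$ example ($[0,2]$ yielding numerator $1$ from $\eta_A=2$) rightly shows the case is not vacuous. Second, realizability: B\'ezout settles the linear algebra, but one must still exhibit a lattice $d$-polytope whose Fine core is exactly $\{p\}$ and whose Fine core normals are exactly the prescribed configuration; your construction sketch (auxiliary irrelevant facets kept harmless by Theorem~\ref{cor: relevantineqs}, passage to a sublattice) is not verified. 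To be fair, the paper's own proof is no more complete at precisely these two points: its forward direction concludes only that every element of $\numerators$ ``occurs as such a determinant or one of its divisors'', leaving the divisor case unresolved, and its converse defers to Section~\ref{sec:RealizingNumerator}, whose Remark asserts without proof that \eqref{eq: milp} is feasible and outputs the desired polytope; the divisor analysis is in fact handled downstream case by case, e.g.\ the ad hoc treatment of $q=20$ in the proof of Theorem~\ref{thm: Classification3d}. So your blind reconstruction matches the paper's argument and, where it stops short, it stops at the same places --- with the difference that you name the gaps explicitly.
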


\begin{proof}
For all $d > 0$, $\mathcal{I}_d \subseteq \mathcal{I}_{d+1}$ due to the arguments from \S\ref{sec:project-lift}. Let $\finecore(P) = \{p\}$. By definition, the Fine core normals satisfy
\[
\langle a_i , p \rangle = b_i + n^F(P)
\]
for some $b_i \in \Q$, $i=1,\dots,d+1$. Note that we can indeed find $d+1$ linearly independent positively spanning Fine core normals, since the Fine core of $P$ consists of one point. Let $A := A_{a_1,\dots,a_{d+1}}$ be given as above and $b = (b_1,\dots,b_{d+1})$. Then, $p$ and $n^F(P)$ satisfy the following linear system:
\[
\begin{bmatrix}
    A & - \mathbb{1} 
\end{bmatrix} \begin{pmatrix}
    p \\ n^F(P)
\end{pmatrix} = b.
\]
By Cramer's rule, the system has a unique solution. In particular
\begin{equation} \label{eq:FineNumberDet}
   n^F(P) = \frac{\det \begin{bmatrix} A & b \end{bmatrix}}
{\det \begin{bmatrix} A & - \mathbb{1} \end{bmatrix}}, 
\end{equation}
so that $\det \begin{bmatrix} A & - \mathbb{1} \end{bmatrix} \in \mathcal{I}_d$ for each such choice $a_1,\dots,a_{d+1}$ of $d+1$ Fine core normals out of the initial configuration $\mathcal{A}$. In particular, any element in $\numerators$ occurs as such a determinant or one of its divisors. 

Conversely, given such a matrix $A$ we can find a vector $b$ so that the convex hull of the lattice points inside $Ax \leq b$ realizes the Fine number as in \eqref{eq:FineNumberDet} (cf. Section \ref{sec:RealizingNumerator}).

% Now, given a numerator $\eta \in \numerators$, we can realize this number as some $\eta_{A_{a_1,\dots,a_{d+1}}}$. For this, let $A$ be the matrix given by $A = \begin{bmatrix}
%     I_d \\ v^T
% \end{bmatrix}$ where $I_d$ is the $d \times d$ identity matrix and $v$ a vector with non-negative integer entries such that $\sum_{i=1}^d v_i = \eta+1$. Set $M = \begin{bmatrix}
%     A & -\mathbb{1}
% \end{bmatrix}$ and note that $\det M = \det \begin{bmatrix}
%     A & -\mathbb{1}
% \end{bmatrix} = \det(I_d) \cdot \eta = \eta$ and $D = 1$. Take the rows of $A$ as the vectors defining linear inequalities together with an appropriate vector $b$, which gives a polytope $P$ such that $\fineCD{P}$ has $\eta = \eta_{A_{e_1,\dots,e_{d},v}}$ as its numerator.

\end{proof}

\subsection{Realizing a Fine $\Q$-codegree numerator}
\label{sec:RealizingNumerator}

Fix a dimension $d \in \Z_{>0}$ and take a given matrix $A$, whose rows are positively spanning vectors $a_1,\dots,a_{d+1} \subseteq \mathcal{A}$ for $\mathcal{A}$ a Fine core normal configuration as in the statement of Theorem \ref{thm: FiniteNumerators}. We study now how to realize the determinant $\det \begin{bmatrix} A & - \mathbb{1} \end{bmatrix}$ in practice as the Fine number $n^F(P)$ of a lattice polytope $P$ as in \eqref{eq:FineNumberDet}, which we can assume contains the origin in its relative interior. For this, we make use of \eqref{eq: milp} and state this as a remark. 
% First, recall from the statement of Theorem \ref{thm: FiniteNumerators} that $D = \gcd_{j=1,\dots,d+1}\{ \det \begin{bmatrix} A_j \end{bmatrix} \}$ with $A_j$ the matrix $A$ with the $j$-th row removed.

\begin{remark}
Given such a matrix $A$, minimizing $n^F$ in \eqref{eq: milp} with the rows of $A$ as the vectors $a_1,\dots,a_{d+1}$ of a Fine core normal configuration, is equivalent to minimizing $\det \begin{bmatrix} A & b \end{bmatrix}$. By construction, the MILP will output a lattice polytope $P$ whose Fine number realizes $\det \begin{bmatrix} A & - \mathbb{1} \end{bmatrix}$ as the numerator of its Fine $\Q$-codegree, thereby finding an appropriate vector $b$ which describes the desired polytope.
\end{remark}

% \sofia{for any such determinant we need to show I can find b such that the determinant occurs. We have shown Id contained in union. Say: Conversely given A we can find a b so that the convex hull of the lattice points inside $Ax \leq b$ have this Fine number. Remark: Alternatively, run Milp and minimize fine number. Solution is lattice polytope realizing the numerator. For ex 7, say 1/7,3/7,5/7...? }

%% pyramids
%%%%%%%%%%%

\subsection{Lattice Pyramids}
\label{sec:latticepyramids}

Now we present some results on lattice pyramids and the behavior of their Fine $\Q$-codegree. Here we need the following definition, which we phrase similarly as in
\cite{latticePyramid}.

\begin{definition} \label{def: LatticePyramid}
Let $P$ be a $d$-dimensional lattice polytope. We say $P$ is a
\textit{lattice pyramid} if $P$ is a lattice point or if there is a
lattice polytope $P' \subset \R^{d-1}$ such that $P \cong
\operatorname{conv}(P' \times \{ 1\}, \{ 0\}) \subset \R^{d-1} \times
\R$. For instance, the standard simplex $\Delta_d$ is a lattice
pyramid.
\end{definition}

%From the previous data, we can draw some important conclusions. 
Using explicit examples from which we obtained the data presented in
the table, we have observed that there are certain lattice polytopes
$P$ with Fine $\Q$-codegree $\fineCD{P} > 1$ such that the value
$\fineCD{P}+1$ is taken by a lattice pyramid over $P$. Note that if
$\fineCD{P} < 1$, then the lattice pyramid over $P$ will have Fine
$\Q$-codegree $2$. However, it is not straightforward to describe the
Fine $\Q$-codegree of a lattice pyramid, as the following example
illustrates.

\begin{example}
\label{eg:rational-pyramid}
% Note that in the assumption that $P$ is a lattice polytope is necessary. 
If we consider a lattice pyramid over the rational segments $S_1 =
[-\frac{1}{5}, \frac{2}{5}]$ and $S_2 = [\frac{1}{5}, \frac{4}{5}]$,
we have that $\fineCD{S_1} = \frac{10}{3} = \fineCD{S_2}$ but
$\fineCD{\operatorname{Pyr}(S_1)} = \frac{10}{3}$ and
$\fineCD{\operatorname{Pyr}(S_2)} = 3$.
\end{example}

This observation leads us to a result which will be used below in our main
classification Theorem.

\begin{theorem} \label{thm: MuFofPyramid}
Let $P$ be a $d$-dimensional lattice polytope and $\pyr$ a
lattice pyramid over $P$. Then 
$$\fineCD{\pyr} = \max \{ 2, \fineCD{P}+1 \}.$$
\end{theorem}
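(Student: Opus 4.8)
The plan is to place the pyramid concretely with its apex at the origin and its base at height one, $\pyr = \operatorname{conv}(P \times \{1\},\, \{(0,0)\}) \subseteq \R^{d} \times \R$, and to analyse the defining Fine inequalities of $\pyr$ directly. First I would record the support function of the pyramid: since its vertices are the apex together with the points $(v,1)$ for vertices $v$ of $P$, for any $(a,c) \in (\Z^{d})^* \times \Z$ one gets
\[
  h_{\pyr}(a,c) = \min\{0,\; h_P(a)+c\}.
\]
Two immediate consequences come from the vertical functionals $(0,1)$ and $(0,-1)$, whose Fine inequalities read $y \geq s$ and $y \leq 1-s$. Together these force $s \leq 1/2$, hence $n^F(\pyr) \leq 1/2$ and $\fineCD{\pyr} \geq 2$; this is the source of the $2$ in the claimed maximum.

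Next, the heart of the argument is to reduce the infinite family of Fine inequalities indexed by $(a,c)$ to something recognisable. For a fixed nonzero $a \in (\Z^d)^*$ I would optimise the inequality $\langle a, x\rangle + cy \geq \min\{0, h_P(a)+c\} + s$ over all integers $c$, that is, identify the value of $c$ producing the strongest constraint on $\langle a, x\rangle$. A short case analysis on the sign of $h_P(a)+c$, using $0 \leq y \leq 1$, shows the optimum is attained at $c = -h_P(a)$ and collapses the whole family in direction $a$ to the single inequality
\[
  \langle a, x\rangle - h_P(a)\, y \geq s .
\]
Crucially this uses that $h_P(a) \in \Z$ for every $a \in (\Z^d)^*$, which holds precisely because $P$ is a \emph{lattice} polytope; this is exactly the step that fails for the rational segments of Example~\ref{eg:rational-pyramid}, explaining why the formula is special to the lattice setting. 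I expect this integer optimisation to be the main obstacle, both in carrying out the case distinction cleanly and in justifying that passing to one binding inequality per direction $a$ genuinely describes $\fineadjoint{\pyr}$ rather than a relaxation of it (the reduced inequality lies in the family, at $c=-h_P(a)$, and dominates all others for that $a$).

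Finally I would slice by height. Fixing $y = t$ with $t > 0$, the reduced inequalities $\langle a, x\rangle - h_P(a)\,t \geq s$ for all nonzero $a$ become, after writing $x = tz$, exactly the condition $z \in \fineadjoint[s/t]{P}$; hence a feasible $x$ at height $t$ exists if and only if $\fineadjoint[s/t]{P} \neq \vac$, i.e.\ $s \leq t\, n^F(P)$. Combining this with $s \leq t \leq 1-s$, the task becomes to maximise $s$ subject to $\max\{s,\, s/n^F(P)\} \leq t \leq 1-s$ for some $t$, equivalently subject to $\max\{s,\, s/n^F(P)\} \leq 1-s$. Splitting into the cases $n^F(P) \geq 1$ and $n^F(P) \leq 1$ (equivalently $\fineCD{P} \leq 1$ and $\fineCD{P} \geq 1$) gives $s = 1/2$ in the first case and $s = n^F(P)/(1+n^F(P))$ in the second, that is $\fineCD{\pyr} = 2$ respectively $\fineCD{\pyr} = \fineCD{P}+1$. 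Since these two regimes are exactly those where $2 \geq \fineCD{P}+1$ and $2 \leq \fineCD{P}+1$, the outcomes assemble into $\fineCD{\pyr} = \max\{2,\, \fineCD{P}+1\}$, as claimed.
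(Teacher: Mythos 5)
Your proof is correct, but it takes a structurally different route from the paper. The paper proves the two inequalities separately: for the lower bound $\fineCD{\pyr} \ge \fineCD{P}+1$ it maps an adjoint point $(p,h) \in \fineadjoint[t]{\pyr}$ to $p/h \in \fineadjoint[t/(1-t)]{P}$ using the validity of $\langle a, x\rangle - b\,x_{d+1} \ge 0$ on the pyramid, and for the upper bound it exhibits the explicit point $(1-s)(p,1)$ with $p \in \operatorname{relint}(\finecore(P))$ and $s = \min\{\tfrac12, \tfrac{1}{\fineCD{P}+1}\}$, verified by a case split on the last coordinate $\alpha$ of the normal ($\alpha \le 0$ versus $\alpha \ge 1$, where integrality of $\alpha$ enters). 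You instead compute the Fine adjoint of $\pyr$ exactly: your dominance lemma, that for fixed $a \ne 0$ the whole family of Fine inequalities indexed by $c \in \Z$ collapses (given $s \le y \le 1-s$) to the single one at $c = -h_P(a)$, reduces $\fineadjoint{\pyr}$ to the homogenized system $\langle a,x\rangle - h_P(a)\,y \ge s$ plus the vertical bounds, and slicing at height $t$ then identifies each slice with $t \cdot \fineadjoint[s/t]{P}$, yielding the closed formula $n^F(\pyr) = \min\{\tfrac12,\, n^F(P)/(1+n^F(P))\}$ with both directions of the equality falling out of one optimization. What your route buys is a description of the full Fine mountain of the pyramid, not just its height, and it isolates transparently where the lattice hypothesis is used ($h_P(a) \in \Z$, so that $c = -h_P(a)$ is an admissible integer), which indeed explains Example~\ref{eg:rational-pyramid}; the paper uses latticeness in the equivalent but less visible form of a $\Z^d$-translate normalizing $h_P(a)=0$. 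What the paper's route buys is that it never has to argue that a reduced subsystem describes $\fineadjoint{\pyr}$ exactly — the point you rightly flag as the main thing to justify (and which your dominance argument does handle, since the binding inequality belongs to the family and, on $0 \le y \le 1$, implies all others in its direction). Two small points you should make explicit in a written version: the equivalence $\fineadjoint[s/t]{P} \neq \vac \iff s \le t\, n^F(P)$ uses that the supremum defining $n^F(P)$ is attained, which holds because the Fine mountain is a closed polytope; and the bound $0 \le y \le 1$ used in the dominance step must be drawn from the vertical reduced inequalities $s \le y \le 1-s$ (with $s>0$) so that the reduced system is self-contained. Neither is a gap, merely bookkeeping.
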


\begin{proof}

% Let $P$ have its inequality description as in \eqref{eq:
% minimaldescription} and
For convenience denote $\mu := \fineCD{P}$.
We first show the lower bounds.
% They do not hold for rational polytopes $P$.

The valid inequalities $0 \le x_{d+1} \le 1$ for $\pyr$ show that
$\fineadjoint{\pyr} = \vac$ for $s > 1/2$. Hence, $\fineCD{\pyr} \ge 2$.

To prove $\fineCD{\pyr} \ge \mu+1$ we show that, if
$\fineadjoint[t]{\pyr} \neq \vac$, then 
$\fineadjoint[s]{P} \neq \vac$ for $s = \frac{t}{1-t}$.
So suppose $(p,h) \in \fineadjoint[t]{\pyr}$. We claim that $p/h \in
\fineadjoint[s]{P}$. Consider a valid inequality $\langle a, x \rangle
\ge b$ for $P$.
% so that $\langle a, p/h \rangle < b+s$.
Then $\langle a, x \rangle - b x_{d+1} \ge 0$ is valid
for every $(x,x_{d+1}) \in \pyr$. Then $(p,h) \in
\fineadjoint[t]{\pyr}$ implies $\langle a, p \rangle - bh
\ge t$. We obtain
$
\langle a, p/h \rangle \ge b + t/h \ge b + s
$
where we use $h \le 1-t$ as $(p,h) \in \fineadjoint[t]{\pyr}$.

For the upper bound on $\fineCD{\pyr}$,
let $p \in \operatorname{relint}(\finecore(P))$. We want to show that
$(\hat{p}, \hat{h}) \coloneq (1-s) (p,1) \in \fineadjoint{\pyr}$ for
$s = \min \left\{\frac12, \frac{1}{\mu +1} \right\}$.

Let $\langle a, x \rangle + \alpha x_{d+1} \ge b$ with be some
inequality valid for all $(x,x_{d+1}) \in \pyr$. By considering a
$\Z^d$-translate of $P$ (this corresponds to applying a unimodular
transformation to $\pyr$ which leaves the functional $a$ unchanged),
we may assume that $a$ takes its minimum over $P$ at $0$.  
Then $\< a,p \> \ge 1/\mu \ge \frac{s}{1-s}$.

First, assume that $\alpha \leq 0$. In this case, the functional
$(a,\alpha)$ achieves its minimum along $\pyr$ at the vertex
$e_{d+1}=(0,...,0,1)$, and this minimum is $\alpha$.
Then
$$
\< (a,\alpha) , (\hat p, \hat h) \> = (1-s) \ \< (a,\alpha) , (p, 1) \> 
\ge (1-s) \left(
  \frac{s}{1-s} + \alpha \right) \ge s + \alpha \,.
$$
as desired.

If, on the other hand, $\alpha \ge 1$, the functional
$(a,\alpha)$ achieves its minimum along $\pyr$ at the origin
and this minimum is $0$.
Then 
$$
\langle (a,\alpha), (\hat{p}, \hat{h}) \rangle = (1-s) \langle
(a,\alpha), (p,1) \rangle \ge (1-s) \ \left(
  \frac{s}{1-s} + \alpha \right) \ge s + 0
$$
\end{proof}

\section{Classification results}
\label{sec: classification}

In this section, we want to get to know the Fine spectra
$\fineSpec{d}$. This is easy for lattice segments:
\begin{equation}
  \label{eq:dimension1spectra}
  \fineSpec{1} = \left \{ \frac{2}{k} \text{ } \Bigl\vert \text{ } k \in
    \Z_{>0} \right \} \,,
\end{equation}
but already non-trivial for polygons (cf.~\S\ref{sec:d=2}), where we
obtain a full characterization.
After developing some methods in~\S\S\ref{sec:project-lift} \&
\ref{sec:circuits-pyramids-determinants} and some computational tools
in~\S\ref{sec:computation}, we narrow down $\fineSpec{3}$
(\S~\ref{sec:d=3}), and we determine the finite set $\fineSpec{d} \cap
\Q_{\ge d-1}$ for all $d$ in \S\ref{sec:general-d}.

\subsection{$d=2$}
\label{sec:d=2}
To classify the Fine spectrum in dimension $2$, we need to first describe all possible Fine core normal configurations. These correspond to $2$-dimensional lattice polytopes with the origin as the sole interior lattice point, or in this case, to 16 reflexive polytopes, see \cite{latticepolygons12, kohl2020unconditional}.

After filtering through the 16 configurations using Lemma \ref{lemma: dimdcircuits}, we obtain just one configuration which does not have opposing vertices. We state this as a remark.

\begin{remark} \label{rmk: only2DimConfig}
The only $2$-dimensional Fine core normal configuration which does not
have opposing vertices is given by the vectors $\mathcal{A} = \{ (1,0),
(0,1), (-1,-1) \}$.
%$\mathcal{A} = \{ (0,1), (1,1), (-1,-2) \}$.
\end{remark}

One can alternatively use the classification of toric $l$-reflexive surfaces
for the index $l=1$ by Kasprzyk and Nill in \cite{KN11} as part of the Graded Ring Database \cite{BK} or the classification in \cite{MR546291} where we are only interested in the $2$-dimensional polytope whose Fine core normal configuration corresponds to the normal fan of the projective space $\mathbb{P}^2$ as a toric variety. We now state our classification result.

\begin{theorem} \label{thm: 2DimSpectrum}
The $2$-dimensional Fine $\Q$-codegree spectrum has the form 
\[
S^F(2) = \left \{ \frac{2}{k} \text{ } \Bigl\vert \text{ } k \in
  \Z_{>0} \right\} \cup \left \{ \frac{3}{k} \text{ } \Bigl\vert
  \text{ } k \in \Z_{>0} \right\} .
\]
\end{theorem}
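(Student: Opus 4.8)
The plan is to prove the two inclusions $\{2/k\}\cup\{3/k\}\subseteq\fineSpec{2}$ and $\fineSpec{2}\subseteq\{2/k\}\cup\{3/k\}$ separately, where throughout $k$ ranges over $\Z_{>0}$. The forward inclusion is a realizability statement and is the easier of the two; the reverse inclusion carries the structural content and proceeds by splitting $\fineSpec{2}$ according to the Fine core normal configuration of the realizing polygon.

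For $\supseteq$, the segment spectrum $\fineSpec{1}=\{2/k\}$ already lies inside $\fineSpec{2}$ by the inclusion $\fineSpec{1}\subseteq\fineSpec{2}$ from \S\ref{sec:project-lift}. To realize every value $3/k$ I would take the dilated standard triangle $k\Delta_2$: since $\fineCD{\Delta_2}=3$ (a direct computation of the Fine mountain of $\Delta_2=\conv(0,e_1,e_2)$, whose Fine adjoint is nonempty exactly for $s\le 1/3$), the homogeneity relation \eqref{eq:homgeneity} gives $\fineCD{k\Delta_2}=\fineCD{\Delta_2}/k=3/k$. As $k\Delta_2$ is a lattice polygon for every $k$, this places all of $\{3/k\}$ inside $\fineSpec{2}$.

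For $\subseteq$, I would write $\fineSpec{2}=\fineSpec{1}\cup(\fineSpec{2}\setminus\fineSpec{1})$. By \eqref{eq:dimension1spectra} the first piece is $\{2/k\}$, and every element of the second piece is realized by a lattice polygon $P$ with $\finecore(P)$ a single point. Such a $P$ has a Fine core normal configuration which, after Lemma~\ref{lemma:enoughWithVertices}, may be taken to be the vertex set of one of the $16$ reflexive polygons, and I would split into two cases via Remark~\ref{rmk: only2DimConfig}. If the configuration contains a pair of opposing primitive normals $a,-a$, then $1\cdot a+1\cdot(-a)=0$ is a positive relation among two Fine core normals, so Lemma~\ref{lemma: dimdcircuits} with $\ell=1$ yields a lattice polytope $Q$ of dimension $\le 1$ with $\fineCD{P}=\fineCD{Q}\in\fineSpec{1}=\{2/k\}$. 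By Remark~\ref{rmk: only2DimConfig} the only remaining configuration is $\mathcal{A}=\{(1,0),(0,1),(-1,-1)\}$, which is positively spanning. For this case I would run the Cramer's rule computation from the proof of Theorem~\ref{thm: FiniteNumerators}: with $A$ the matrix whose rows are these three vectors, one computes $\det\begin{bmatrix} A & -\mathbb{1}\end{bmatrix}=-3$, and since $P$ is a lattice polygon the support values $b_i=h_P(a_i)$ are integers, so \eqref{eq:FineNumberDet} gives $n^F(P)=\det\begin{bmatrix} A & b\end{bmatrix}/(-3)\in\tfrac13\Z$. Hence $\fineCD{P}=1/n^F(P)=3/m$ for some $m\in\Z_{>0}$, landing in $\{3/k\}$. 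Combining the cases gives $\fineSpec{2}\subseteq\{2/k\}\cup\{3/k\}$.

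The main obstacle is the organization of the reverse inclusion rather than any single computation: one must verify that the two tools interlock correctly, namely that every opposing-vertex configuration genuinely falls under Lemma~\ref{lemma: dimdcircuits} (the source of the $\{2/k\}$ values) and that the lone surviving configuration contributes only numerator $3$. The enumeration that $15$ of the $16$ reflexive configurations have opposing vertices is precisely Remark~\ref{rmk: only2DimConfig}, so it can be cited directly; the determinant $-3$ together with the integrality of the $b_i$ then closes the argument. A final consistency check I would record is that $\{2/k\}\cup\{3/k\}$ is exactly the set of reduced fractions with numerator in $\{1,2,3\}$, matching the numerator set $\mathcal{I}_2$ predicted by Theorem~\ref{thm: FiniteNumerators}.
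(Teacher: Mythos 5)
Your proof is correct and follows essentially the same route as the paper: reduce via the project-and-lift arguments and Lemma~\ref{lemma: dimdcircuits} to the sixteen reflexive configurations, send the fifteen with opposing vertices into $\fineSpec{1}$, and realize the values $\frac{2}{k}$ and $\frac{3}{k}$ exactly as the paper does, via lifting one-dimensional examples and scaling $\Delta_2$ using homogeneity~\eqref{eq:homgeneity}. If anything, you are more explicit than the paper's written proof in one step: that the configuration $\mathcal{A}=\{(1,0),(0,1),(-1,-1)\}$ contributes only numerator $3$ is left implicit there (delegated to Theorem~\ref{thm: FiniteNumerators} and recorded in Remark~\ref{rem: I2Numerators}), whereas you close it directly with $\det\begin{bmatrix} A & -\mathbb{1}\end{bmatrix}=-3$ together with the integrality of the support values $b_i=h_P(a_i)$.
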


\begin{proof}
By Lemma \ref{lemma: dimdcircuits} and Section \ref{sec:project-lift}, we know that if a reflexive $2$-dimensional lattice polytope $P$ contains opposing vertices, then $\frac{1}{\fineCD{P}} = \frac{k}{2}$ for some $k \in \Z_{>0}$, which is the case for 15 out of the 16 reflexive polytopes. Also, for a given such $k \in \Z_{>0}$, we can find a polytope $P$ with $\frac{1}{\fineCD{P}} = \frac{k}{2}$ by \eqref{eq:dimension1spectra} and the same projecting and lifting arguments.

Hence, the only Fine core normal configuration we are interested in is $\mathcal{A}$ as in Remark \ref{rmk: only2DimConfig}.For
the standard simplex it holds that $\fineCD{\Delta_2} =3$, so
% by Proposition \ref{prop: scalingmu}
using the homogeneity~\eqref{eq:homgeneity}, we can find a polytope $P$ with $\frac{1}{\fineCD{P}} = \frac{k}{3}$ which has this Fine core normal configuration by scaling $\Delta_2$ by any $k \in \Z_{>0}$, as desired.
\end{proof}

\begin{remark} \label{rem: I2Numerators}
In the notation of Definition \ref{def: IdNumerators}, we obtain $\mathcal{I}_2 = \{2,3\}$.
\end{remark}

\subsection{$d=3$}
\label{sec:d=3}
In order to study higher dimensional cases, we again need a classification
of lattice polytopes with the origin as their only interior lattice
point. This proves already challenging in dimension 3. For this we have used the classification of the toric canonical Fano 3-folds from the Graded Ring Database as developed in \cite{Kas08}. This database contains 674,688 such polytopes. Naively, one could try to determine all values of $\mu^F$ by the Spectrum exploring process mentioned in our discussion following \eqref{eq: milp}, but we will illustrate how to filter out this database instead.

\begin{prop} \label{prop: I3Numerators}
The set of numerators of the rational number $\fineCD{P}$ where $P$ ranges over all $3$-dimensional lattice polytopes is
$$\mathcal{I}_3 \subseteq \mathcal{I}_2 \cup \{ 4,5,7,11,13,17,19,20 \}.$$
\end{prop}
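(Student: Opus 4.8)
The plan is to specialize Theorem~\ref{thm: FiniteNumerators} to $d=3$. It reduces the computation of $\mathcal{I}_3$ to that of $\mathcal{I}_2 = \{2,3\}$ (Remark~\ref{rem: I2Numerators}) together with the finite collection of integers $\eta_A = \det[A \mid -\mathbb{1}]/D$, where $A$ ranges over the matrices built from positively spanning $4$-subsets $a_1,\dots,a_4$ of $3$-dimensional Fine core normal configurations. Thus it suffices to show that every such $\eta_A$ that is not already in $\mathcal{I}_2$ lies in $\{4,5,7,11,13,17,19,20\}$.

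First I would make the index set concrete. By Lemma~\ref{lemma:enoughWithVertices}, a Fine core normal configuration is closed under passing to the nonzero lattice points of the convex hull of the origin and the configuration; hence the configurations $\mathcal{A}$ are exactly the full lattice-point sets of $3$-dimensional lattice polytopes whose only interior lattice point is the origin, i.e.\ the toric canonical Fano $3$-folds classified by Kasprzyk~\cite{Kas08} in the Graded Ring Database~\cite{BK} (674{,}688 of them). Because a positively spanning $4$-subset spans a simplex $\conv(a_1,\dots,a_4)$ contained in such a polytope, this simplex again has the origin as its only interior lattice point, so it is a canonical Fano simplex, i.e.\ the weight simplex of a well-formed weighted projective $3$-space. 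A short computation --- cofactor-expanding $\det[A\mid -\mathbb{1}]$ along its last column and using that the signed maximal minors $\det[A_j]$ are proportional to the coefficients of the unique primitive positive relation $\sum_i w_i a_i = 0$ --- identifies $\eta_A$ with the weight sum $\sum_i w_i$. So the relevant $\eta_A$ are precisely the weight sums of these weighted projective spaces.

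The crux is that the naive list of all weight sums is far too large: for instance the reflexive simplex with weights $(1,1,1,3)$ has weight sum $6 \notin \{4,5,7,11,13,17,19,20\}$. The key step is therefore to discard the configurations whose Fine $\Q$-codegree is already captured in lower dimension via Lemma~\ref{lemma: dimdcircuits}. Concretely, whenever the completed configuration $\mathcal{A}$ contains a positive circuit of length at most $3$ (two opposing vectors, or three vectors with the origin in the relative interior of their triangle), Lemma~\ref{lemma: dimdcircuits} produces a polytope of dimension $\le 2$ with the same Fine $\Q$-codegree, so that value lies in $\mathcal{I}_2$. The $(1,1,1,3)$ example is eliminated in exactly this way: the relative interior of one of its triangular facets contains the lattice point $(0,0,1) = -a_4$, which by Lemma~\ref{lemma:enoughWithVertices} is itself a Fine core normal and forms a length-$2$ circuit with $a_4$. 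I would thus enumerate, over the database, only those simplices whose completed configuration has no such short circuit, compute the associated weight sums, and verify that the surviving numerators all lie in $\{4,5,7,11,13,17,19,20\}$ --- with weights $(1,1,1,1)$ giving $4 = \fineCD{\Delta_3}$, and the remaining values coming from the few residual (circuit-free) weight systems.

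The main obstacle is completeness at scale. One must certify, across all $674{,}688$ configurations and all their positively spanning $4$-subsets, both that the circuit reduction of Lemma~\ref{lemma: dimdcircuits} removes every value outside the target set, and that no residual weight sum outside $\{4,5,7,11,13,17,19,20\}$ survives. This is a computer-assisted verification whose difficulty is organizational --- bounding $\det[A\mid -\mathbb{1}]$ over the classified simplices, detecting short circuits in each completed configuration, and reducing to inclusion-minimal simplices --- rather than any single delicate estimate.
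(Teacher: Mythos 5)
Your proposal is correct and takes essentially the same route as the paper: specialize Theorem~\ref{thm: FiniteNumerators} to $d=3$, run over Kasprzyk's database of $674{,}688$ canonical Fano $3$-topes, discard configurations containing short positive circuits via Lemmas~\ref{lemma:enoughWithVertices} and~\ref{lemma: dimdcircuits} (exactly how the paper cuts the list down to $4{,}979$ configurations and then to the eight simplex configurations in \eqref{eq: 3DimConfigurations}), and certify by machine that the surviving numerators lie in the stated set. Your identification of $\eta_A$ with the weight sum $\sum_i w_i$ of the associated canonical weight system is correct and is a clarifying reformulation the paper does not make explicit; it even anticipates that $20$ is extraneous --- the configuration $\mathcal{A}_{20}$ has weight system $(1,1,1,1)$ with $D=5$, so its raw determinant $20$ reduces to $4$ --- a fact the paper only records later in the proof of Theorem~\ref{thm: Classification3d}.
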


\begin{proof}
By the Graded Ring Database classification we have 674,688 possible
Fine core normal configurations in dimension $3$. Using Lemma
\ref{lemma: dimdcircuits} and by Remark \ref{rem: I2Numerators}, we
know that $\mathcal{I}_2 = \{2,3\} \subset \mathcal{I}_3$ and we are
from this point on, interested only in 4,979 configurations. By
Corollary~\ref{thm: FiniteNumerators}
we need to consider only those
Fine core normal configurations whose vertices define a simplex. After
filtering, we are left with only $8$ such configurations, namely the
ones given by
\begin{equation}\label{eq: 3DimConfigurations}
\begin{split}
\mathcal{A}_{19} & = \{ [1,-2,-5],[-2,2,7],[0,1,0],[1,0,0] \}\\
    \mathcal{A}_{17} & = \{ [-1,1,-5],[1,-2,7],[0,1,0],[1,0,0] \}\\
    \mathcal{A}_{11} & = \{ [1,0,0],[0,1,0],[-2,1,5],[1,-1,-3] \}\\
    \mathcal{A}_{13} & = \{ [-1,1,-4],[1,-2,5],[0,1,0],[1,0,0] \}\\
    \mathcal{A}_{20} & = \{ [-2,2,-5],[1,-3,5],[0,1,0],[1,0,0] \}\\
    \mathcal{A}_{7} & = \{ [-1,1,-2],[1,-2,3],[0,1,0],[1,0,0] \}\\
    \mathcal{A}_5 & = \{ [-1,1,-1],[1,1,2],[0,1,0],[1,0,0] \}\\
    \mathcal{A}_4 & = \{ [-1,-1,-1],[0,0,1],[0,1,0],[1,0,0] \}\\
\end{split}
\end{equation}
whose corresponding numerators are 19, 17, 11, 13, 20, 7, 5
and 4 respectively. By Theorem \ref{thm: FiniteNumerators}, the result follows.
\end{proof}

From our previous discussions and results, we deduce the classification of the Fine spectrum in dimension $3$.

\begin{theorem} \label{thm: Classification3d}
The $3$-dimensional Fine $\Q$-codegree spectrum has the form
\[
S^F(3) = \left\{ \frac{q}{k \cdot \ell} \Bigl\vert q \in
  \{2,3,4,5,7,11,13,17,19\}, \ell \in S_q \text{ and } k \in \Z_{> 0}
\right\}.
\]
where %$s_q \in \Z$ is an integer number depending on each such $q$ and 
$S_q$ is a non-empty set of positive integer numbers depending on $q$. 
\end{theorem}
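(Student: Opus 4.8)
The plan is to feed the numerator bound of Proposition~\ref{prop: I3Numerators} into the projection and homogeneity machinery of \S\ref{sec:project-lift}, using the determinant formula \eqref{eq:FineNumberDet} to read off, configuration by configuration, exactly which values occur. Throughout, the parameter $k$ in the statement will be supplied by the homogeneity relation \eqref{eq:homgeneity}, while $S_q$ will collect the denominators of the values with numerator $q$ that remain after the dilation freedom has been factored out.

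First I would reduce to a point-like Fine core. By Proposition~\ref{Prop:coreispt} and the inclusion $\fineSpec{2}\subset\fineSpec{3}$, any value realized by a $3$-polytope with $\dim\finecore>0$ already occurs in $\fineSpec{2}=\{2/k\}\cup\{3/k\}$, contributing only the numerators $2$ and $3$. Thus I may assume $\finecore(P)=\{p\}$, and by Lemmas~\ref{lemma: dimdcircuits} and~\ref{lemma:enoughWithVertices} restrict to the eight simplex configurations $\mathcal A_4,\dots,\mathcal A_{20}$ singled out in the proof of Proposition~\ref{prop: I3Numerators}. For each such configuration, fixing a positively spanning simplex sub-configuration with matrix $A$ gives, via \eqref{eq:FineNumberDet}, $\fineCD{P}=\eta_A/N'$, where $N'\in\Z$ arises after cancelling the common factor $D=\gcd_j\det A_j$ from $\det[A\mid b]$ and $\det[A\mid-\mathbb{1}]$, and $\eta_A\in\{4,5,7,11,13,17,19,20\}$. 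Dilating $P$ by $k$ multiplies $b$, hence $N'$, by $k$, so every value sharing a numerator is obtained from an undilated one by $N'\mapsto kN'$; this is precisely what produces the factor $k$ and lets me take $S_q$ to be the denominators of the undilated values carrying numerator $q$.

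That each numerator genuinely occurs — so that every $S_q$ is non-empty — is the converse half of Theorem~\ref{thm: FiniteNumerators}: for each configuration it furnishes a right-hand side $b$, and hence a lattice polytope, realizing the determinant as numerator, concretely as an output of \eqref{eq: milp}; together with the lower-dimensional contributions these account for all of $2,3,4,5,7,11,13,17,19$. If one moreover wishes each $S_q$ to be finite (which the statement does not demand), this would have to be extracted from Theorem~\ref{thm:finitespectrum}, controlling configuration by configuration the denominators that are not mere dilations — exactly the point at which the paper already flags that the necessary range of the MILP bounds $L,U$ is not a priori clear.

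The step I expect to be the main obstacle is showing that the numerator $20$ permitted by Proposition~\ref{prop: I3Numerators} can be omitted from the list. Here I would analyze the configuration $\mathcal A_{20}$, whose matrix satisfies $\det[A\mid-\mathbb{1}]=\pm 20$, and prove that every admissible $b$ defining a genuine lattice polytope with this exact configuration yields an integer $N'$ divisible by $4$ or by $5$, i.e.\ $\gcd(20,N')\ge 4$. Granting this, each value $\fineCD{P}=20/N'$ equals $5/(N'/4)$ or $4/(N'/5)$ (and $2/(N'/10)$ when $10\mid N'$), hence coincides with a value of numerator $4$, $5$, or $2$ that is already present in the spectrum, so no numerator-$20$ family is required. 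This divisibility is a finite arithmetic condition on the cone of admissible $b$ for $\mathcal A_{20}$, which I would settle by running \eqref{eq: milp} over the pertinent range of $n^F$. Collecting the nine surviving numerators together with their denominator sets $S_q$ and the dilation parameter $k$ then yields the asserted description of $\fineSpec{3}$.
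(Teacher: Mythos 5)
Your overall architecture coincides with the paper's proof: reduce to point Fine cores, restrict to the eight simplex configurations from Proposition~\ref{prop: I3Numerators}, read off numerators via Cramer's rule \eqref{eq:FineNumberDet}, supply the parameter $k$ by homogeneity \eqref{eq:homgeneity}, and certify non-emptiness of each $S_q$ by realizing each numerator through \eqref{eq: milp} (the paper makes this concrete by exhibiting $\Delta_3$ for $q=4$ and explicit polytopes $P_5, P_7, P_{11}, P_{13}, P_{17}, P_{19}$, e.g.\ with $\fineCD{P_5}=\frac52$ and $\fineCD{P_{19}}=\frac{19}{840}$). However, the one step you yourself flag as the main obstacle --- eliminating the numerator $20$ --- contains a genuine gap. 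You propose to show that every admissible $b$ for the configuration $\mathcal{A}_{20}$ yields $N'$ divisible by $4$ or by $5$, and to ``settle'' this ``by running \eqref{eq: milp} over the pertinent range of $n^F$.'' That cannot work as a proof: the claim is universal over all integral right-hand sides $b$ realizing $\mathcal{A}_{20}$, a set that remains infinite even after quotienting by translations ($b \mapsto b + At$) and dilations ($b \mapsto kb$), while an MILP run only samples optima in finitely many windows $[L,U]$ and can never certify a for-all divisibility statement. Your assertion that this is ``a finite arithmetic condition on the cone of admissible $b$'' is precisely the missing content, not a consequence of anything you have established.

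What the paper does instead is a one-line determinant argument that your own setup already contains the ingredients for: for $A$ the matrix of $\mathcal{A}_{20}$, every maximal minor $\det \begin{bmatrix} A_j \end{bmatrix}$ equals $\pm 5$ (so $D = 5$ and $\det \begin{bmatrix} A & -\mathbb{1} \end{bmatrix} = \pm 20$), and cofactor expansion along the last column gives
\[
\det \begin{bmatrix} A & b \end{bmatrix} = \sum_{j} \pm\, b_j \det \begin{bmatrix} A_j \end{bmatrix} \equiv 0 \pmod 5
\]
for \emph{every} $b \in \Z^4$, the integrality of $b$ being automatic for a lattice polytope. Hence $\fineCD{P} = 20/N'$ with $5 \mid N'$, so $\fineCD{P} = 4/(N'/5)$ is already a numerator-$4$ value --- uniformly, with no case analysis and no disjunction ``$4$ or $5$.'' A secondary, smaller inaccuracy: for positive-dimensional Fine cores you invoke Proposition~\ref{Prop:coreispt} together with $\fineSpec{2} \subset \fineSpec{3}$, but the natural projection only outputs a \emph{rational} polytope in a quotient lattice, so it does not by itself place the value in $\fineSpec{2}$, which is defined via lattice polytopes; the correct tool, which you also cite, is Lemma~\ref{lemma: dimdcircuits}, producing a lattice polytope of dimension at most $2$ with the same Fine $\Q$-codegree, exactly as in the paper's opening reduction giving $S_2 = S_3 = \{1\}$. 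With the cofactor argument substituted for the MILP sampling, your proof would match the paper's.
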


\begin{proof}
By Proposition \ref{prop: I3Numerators} if the vertices of a $3$-dimensional Fine core normal configuration satisfy that there is a linear relation with positive coefficients between $2$ or $3$ of them, then by Theorem \ref{thm: 2DimSpectrum}, $\frac{1}{\fineCD{P}} = \frac{k}{2}$ or $\frac{1}{\fineCD{P}} = \frac{k}{3}$ for some $k \in \Z_{>0}$. For a given such $k \in \Z_{>0}$, we can find a $3$-dimensional polytope $P$ with $\frac{1}{\fineCD{P}} = \frac{k}{2}$ or $\frac{1}{\fineCD{P}} = \frac{k}{3}$ due to the arguments from \S\ref{sec:project-lift}. This implies $S_2 = S_3 = \{1\}$.

Hence, if $P$ is a $3$-dimensional lattice polytope whose Fine core is zero-dimensional, $\fineCD{P}$ can only have numerators at most in the set
\[
\{4,5,7,11,13,17,19,20\},
\]
obtained from the Fine core normal configurations in \eqref{eq: 3DimConfigurations}. Our classification is then complete by finding polytopes whose Fine $\Q$-codegree have such numerators and then using~\eqref{eq:homgeneity} for determining $S_q$. 

We set $S_4 = \{1\}$ since $\fineCD{\Delta_3} = 4$.

From the computation of the Fine $\Q$-codegree of low-volume polytopes
presented in Table \ref{fig:tablespec}, for $q=5$ we find
\[
P_5 = \operatorname{conv}([0,0,0],[1,0,0],[1,2,0],[1,0,2]), \quad
\fineCD{P_5} = \frac{5}{2}
\]
so we see $2 \in S_5$ but $1 \notin S_5$. Similarly, for $q=7$ we find
\[
P_7 = \operatorname{conv}([0,0,0],[1,0,0],[0,1,0],[3,8,15]), \quad
\fineCD{P_7} = \frac{7}{6}.
\]
Hence $6 \in S_7$. 

For $q \in \{11,13,17,19,20\}$ we construct a polytope via the
procedure mentioned above, and obtain for $q=11$ the lattice polytope
\[
P_{11} = 5 \cdot \operatorname{conv} \left( [10, -1, 4], \left[-2, -1,
    - \frac{4}{5} \right], [-2, -1, 0], [ -2, 5, -2] \right),
\]
so that $\fineCD{P_{11}} = \frac{11}{60}$ and $60 \in S_{11}$. For
$q=13$, we obtain
\[
P_{13} = 15 \cdot \operatorname{conv}\left([15, -2, -4], [-1, -2,
  0],\left[-1, -2, -\frac{4}{5}\right],\left[-1, \frac{10}{3},
    \frac{4}{3}\right]\right),
\]
such that $\fineCD{P_{13}} = \frac{13}{240}$, and thus $240 \in
S_{13}$. Similarly for $q=17$, we obtain
\[
P_{17} = 21 \cdot \operatorname{conv}\left([9, -2, -2],[-1, -2,
  0],\left[-1, -2, -\frac{4}{7}\right],\left[-1, \frac{14}{3},
    \frac{4}{3}\right]\right),
\]
with $\fineCD{P_{17}} = \frac{17}{420}$, so that $420 \in
S_{17}$. Taking $q=19$ we get the polytope
\[
P_{19} = 21 \cdot \operatorname{conv}\left( \left[\frac{37}{3} , -2,
    \frac{11}{3}\right],\left[-1,-2,-\frac{1}{7}\right],\left[-1,-2,-1\right],\left[
    -1,8,-3 \right]\right),
\]
for which $\fineCD{P_{17}} = \frac{19}{840}$. Hence $840 \in
S_{19}$. Finally, in the case $q=20$, we note that the determinant of
$\begin{bmatrix} A & b \end{bmatrix}$ is a multiple of $5$
independently of the choice of $b$. Hence the values for $q=20$ are
obtained already by inspecting the polytopes whose Fine $\Q$-codegree
has numerator $4$, i.e. $20 \notin \mathcal{I}_3$, completing our proof.
\end{proof}

It is worth remarking that in the classification of the $3$-dimensional Fine spectrum, the sets $S_q$ in the statement of Theorem \ref{thm: Classification3d} for $q \geq 5$ are yet to be fully determined.

\subsection{Classification results in dimension $\boldsymbol{d}$}
\label{sec:general-d}

We have so far studied the Fine spectra in lower-dimensional
cases. From the results mentioned above, we can easily deduce that
studying the Fine spectra for dimensions greater than $3$ entails
dealing with a number of Mixed Linear Programs of several orders of
magnitude. Recall that the classification of the spectra is obtained
by solving each of them individually for several fixed $\varepsilon >
0$ and listing all different values. Thus, it is not very feasible to
follow this procedure already in dimension $4$. However, since we are
interested in general classification results for any dimension $d$, we
are able to state some general facts that hold in any dimension. For
this, the classification of degree 1 lattice polytopes from Victor
Batyrev and Benjamin Nill developed in 2006 will be very useful, see
\cite{BN2006Deg1}.

The degree of a $d$-dimensional polytope $P$ satisfies $\deg(P) =
d+1-\cd(P)$. In order to obtain our classification, we need to state
some definitions first.

\begin{definition}
Given lattice polytopes $P_0,...,P_t$ in $\R^k$, the \textit{Cayley
  sum} $P_0 \star \cdots \star P_t$ is the convex hull of
\[
(P_0 \times 0) \cup (P_1 \times e_1) \cup \cdots \cup (P_t \times e_t)
\subseteq \R^k \times \R^t
\]
for $e_1,...,e_t$ the standard basis of $\R^t$.
\end{definition}

\begin{definition}
We call a $d$-dimensional lattice polytope $P$ a \textit{Lawrence
  prism with heights $h_1,...,h_d$} if it is the Cayley Sum of the
segments $[0,h_1], ... , [0,h_d]$.
\end{definition}

\begin{definition}
A $d$-dimensional lattice polytope $P$, for $d \geq 2$, is called
\textit{exceptional} if it is a simplex which is a $(d-2)$-fold
pyramid over $2$ times the $2$-dimensional standard triangle $2
\Delta_2 := \conv \{ (0,0), (2,0), (0,2) \}$.
\end{definition}

We also need the following Proposition.

\begin{prop} \label{prop:exceptionalsimplex}
Let $P$ be a $d$-dimensional exceptional simplex for $d \geq 2$. Then
\[
\fineCD{P} = d - \frac{1}{2}.
\]
\end{prop}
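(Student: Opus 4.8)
The plan is to compute $\fineCD{P}$ for the exceptional simplex directly from its Fine core normal data, using the determinant formula from Theorem~\ref{thm: FiniteNumerators} together with the pyramid recursion of Theorem~\ref{thm: MuFofPyramid}. I would begin with the base case $d=2$, where the exceptional simplex is precisely $2\Delta_2$. By homogeneity~\eqref{eq:homgeneity} and the value $\fineCD{\Delta_2}=3$ established in the proof of Theorem~\ref{thm: 2DimSpectrum}, we get $\fineCD{2\Delta_2} = \tfrac{1}{2}\fineCD{\Delta_2} = \tfrac{3}{2} = 2 - \tfrac12$, which matches the claimed formula at $d=2$.

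The second step is the inductive one: an exceptional $d$-simplex is a lattice pyramid over an exceptional $(d-1)$-simplex (this is exactly the ``$(d-2)$-fold pyramid over $2\Delta_2$'' description peeled off one layer at a time). So I would apply Theorem~\ref{thm: MuFofPyramid} with $P$ the exceptional $(d-1)$-simplex, giving
\[
\fineCD{\pyr} = \max\left\{2, \fineCD{P} + 1\right\}.
\]
By the induction hypothesis $\fineCD{P} = (d-1) - \tfrac12 = d - \tfrac32$, so $\fineCD{P}+1 = d-\tfrac12$, and for $d \ge 2$ this is at least $\tfrac32$, becoming $\ge 2$ as soon as $d \ge 3$; at $d=3$ it equals $\tfrac52 > 2$, and it only grows, so the maximum is always the second term $\fineCD{P}+1 = d - \tfrac12$. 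This closes the induction and yields the formula in every dimension $d \ge 2$.

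The one point that needs genuine care — and which I expect to be the main obstacle — is verifying that the exceptional simplex really is a \emph{lattice} pyramid over the exceptional simplex one dimension down, in the strict sense of Definition~\ref{def: LatticePyramid} (apex at $0$, base at height $1$). The phrase ``$(d-2)$-fold pyramid over $2\Delta_2$'' asserts this up to lattice equivalence, but to invoke Theorem~\ref{thm: MuFofPyramid} I must present $P$ genuinely as $\operatorname{conv}(P' \times \{1\}, \{0\})$ with $P'$ exceptional of dimension $d-1$. I would pin down an explicit lattice realization of the exceptional simplex, for instance
\[
\conv\{0,\, 2e_1,\, 2e_2,\, e_3,\, \dots,\, e_d\} \subset \R^d,
\]
and observe that taking the pyramid over the last coordinate recovers the exceptional $(d-1)$-simplex as the base $P'$ at height one, with the new apex at the origin. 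A short check that this vertex set is lattice-equivalent to the $(d-2)$-fold pyramid over $2\Delta_2$ completes the argument.

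As an alternative (or a consistency check), one can compute $\fineCD{P}$ non-recursively via~\eqref{eq:FineNumberDet}: the facet normals of the exceptional simplex form a Fine core normal configuration, and evaluating $\det\begin{bmatrix} A & -\mathbb{1}\end{bmatrix}$ and $\det\begin{bmatrix} A & b\end{bmatrix}$ for this explicit vertex set yields $n^F(P) = \tfrac{1}{d - 1/2} = \tfrac{2}{2d-1}$ directly, i.e.\ $\fineCD{P} = d - \tfrac12$. I would prefer the pyramid recursion as the primary route since it is cleaner and reuses Theorem~\ref{thm: MuFofPyramid}, relegating the determinant computation to a remark.
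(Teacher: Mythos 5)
Your proposal is correct and takes essentially the same approach as the paper: the paper likewise handles the base case $d=2$ (solving the Fine core normal equations $p_1 = n^F$, $p_2 = n^F$, $-p_1-p_2 = -2+n^F$ directly to get $\fineCD{2\Delta_2}=\tfrac{3}{2}$, rather than via homogeneity from $\fineCD{\Delta_2}=3$, a trivial difference) and then inducts using Theorem~\ref{thm: MuFofPyramid}, observing that $\fineCD{P}>1$ makes the maximum equal to $\fineCD{P}+1$. Your added care in exhibiting an explicit lattice realization $\conv\{0,2e_1,2e_2,e_3,\dots,e_d\}$ to justify the strict lattice-pyramid structure is sound but is a refinement the paper leaves implicit.
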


\begin{proof}
The relevant valid inequalities for $P$ are given by $x_1 \geq 0$,
$x_2 \geq 0$ and $-x_1-x_2 \geq -2$. So a point $p = (p_1,p_2) \in
\finecore(P)$ satisfies $p_1 = n^F(P)$, $p_2 = n^F(P)$ and $-p_1-p_2 =
-2 + n^F(P)$. Solving for the Fine number, we obtain $n^F(P) =
\frac{2}{3}$. By Theorem \ref{thm: MuFofPyramid}, since $\fineCD{P} >
1$, the result follows by induction.
\end{proof}

The main classification Theorem for degree 1 lattice polytopes can be
stated as follows.

\begin{theorem}{\cite[Theorem 2.5]{BN2006Deg1}} \label{thm:
    BenjaminDeg1}
Let $P$ be a lattice polytope of arbitrary dimension $d$. Then
$\deg(P) \leq 1$ if and only if $P$ is an exceptional simplex or a
Lawrence prism.
\end{theorem}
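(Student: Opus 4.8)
The plan is to prove the classification through the Ehrhart interpretation of the degree, organized around an induction on dimension built on the lattice pyramid operation. Recall that $\deg(P) = d+1-\cd(P)$ agrees with the degree of the $\hs$-polynomial of $P$, and that this polynomial — hence the degree — is unchanged under passing to a lattice pyramid over $P$. The two families in the statement are stable under this operation: an exceptional simplex is by definition an iterated pyramid over $2\Delta_2$, while a lattice pyramid over a Lawrence prism with heights $h_1,\dots,h_d$ is again a Lawrence prism, obtained by appending the degenerate segment $[0,0]$ as a new Cayley factor. This reduces both directions to the case where $P$ is not itself a lattice pyramid.

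For the ``if'' direction I would compute the degree of the base cases and then invoke pyramid invariance. For $2\Delta_2$ one checks directly that it contains no interior lattice point while $2\cdot(2\Delta_2)$ does, so $\cd(2\Delta_2)=2$ and $\deg = 2+1-2=1$; all its pyramids then also have degree $1$. For a Lawrence prism the Cayley structure makes the Ehrhart series transparent: the lattice points all sit at the Cayley levels along the defining segments, and an interior point of a dilate $mP$ forces all $d$ Cayley directions to be used strictly, giving $\cd \ge d$, hence $\deg \le 1$.

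The substance of the theorem is the ``only if'' direction, and this is where I expect the main difficulty. Assume $\deg(P)=1$, equivalently $\cd(P)=d$, so that $(d-1)P$ contains no interior lattice point but $dP$ does; by the reduction above I may assume $P$ is not a pyramid. The key step is to show that such a $P$ is a Cayley polytope of lattice segments — i.e.\ a Lawrence prism — unless it is the single exceptional case $2\Delta_2$. My approach is to pick a lattice point $m \in \operatorname{int}(dP)$ and exploit the tightness coming from $\operatorname{int}((d-1)P)\cap\Z^d = \varnothing$: writing $m/d$ as a strict convex combination of vertices and comparing with the absence of interior points one dilation lower should force the vertices of $P$ onto very few parallel lattice lines, which is exactly the segment-Cayley structure.

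Making this convex-geometric forcing precise is the crux. Concretely, I would either (a) reconstruct the Batyrev--Nill argument, which analyzes the $\hs$-vector ($\hs_0=1$, $\hs_1 = \#(P\cap\Z^d)-(d+1)$, and $\hs_i=0$ for $i\ge 2$) to control the lattice points on $P$ and its facets and inducts on dimension through the pyramid reduction; or (b) invoke the Cayley-decomposition theorem for polytopes of small degree, which guarantees that a degree-$1$ polytope of dimension $d \ge 3$ is a nontrivial Cayley polytope, dispose of the base cases $d \le 2$ by the finite classification of degree-$1$ polygons and segments, and then identify the Cayley factors as segments or recognize $2\Delta_2$. In either route the genuinely hard part is the rigidity — showing that once $P$ is neither a pyramid nor $2\Delta_2$, the degree-$1$ constraint pins down the segment structure completely — rather than the bookkeeping of the induction itself.
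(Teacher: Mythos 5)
The first thing to say is that the paper itself contains no proof of this statement: it is imported verbatim as \cite[Theorem 2.5]{BN2006Deg1}, so there is no internal argument to compare against, and your attempt has to be judged as a standalone proof of the Batyrev--Nill classification.

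Your reduction framework and the ``if'' direction are correct and essentially complete. The degree is the degree of the $h^*$-polynomial, which is invariant under lattice pyramids; exceptional simplices are by definition iterated pyramids over $2\Delta_2$, and a pyramid over a Lawrence prism with heights $h_1,\dots,h_d$ is the Lawrence prism with heights $h_1,\dots,h_d,0$, so both families are pyramid-stable. The computation $\cd(2\Delta_2)=2$ is right, and for a Lawrence prism the Cayley structure gives a lattice projection onto $\Delta_{d-1}$, whence $\cd(P)\geq\cd(\Delta_{d-1})=d$ and $\deg(P)\leq 1$. All of this holds up.

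The genuine gap is that the ``only if'' direction --- the entire content of the theorem --- is never actually proved. Your central claim, that a lattice point $m\in\operatorname{int}(dP)$ together with $\operatorname{int}((d-1)P)\cap\Z^d=\varnothing$ ``should force the vertices of $P$ onto very few parallel lattice lines,'' is precisely the rigidity statement that needs a mechanism, and none is supplied: comparing a strict convex combination representing $m/d$ with the emptiness one dilation lower does not, by itself, produce the $d$ parallel edge directions of a Lawrence prism, nor does it isolate $2\Delta_2$ as the unique non-pyramid exception. Your two fallback routes do not close this. Route (a) is, by its own wording, ``reconstruct the Batyrev--Nill argument'' --- a pointer to the proof you are supposed to give; listing the constraints $h^*_0=1$, $h^*_1=\#(P\cap\Z^d)-(d+1)$, $h^*_i=0$ for $i\geq 2$ is the starting point of that analysis, not the analysis. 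Route (b) is viable in principle --- for $\deg(P)=1$ one has $\cd(P)=d\geq(d+3)/2$ whenever $d\geq 3$, so the Cayley theorem proved in \cite{DiR14} does yield a nontrivial Cayley decomposition $P\cong P_0\star\cdots\star P_t$ --- but this imports machinery heavier than, and historically downstream of, the degree-one classification, and even granting it the remaining work is real and unexecuted: each $P_i\times e_i$ is a face of $P$, so Stanley monotonicity gives $\deg(P_i)\leq 1$, but you must still rule out factors of dimension $\geq 2$ (e.g.\ show that the Cayley sum of $2\Delta_2$ with anything of positive dimension has degree $\geq 2$), run the induction coherently, and settle the base cases $d\leq 2$ via the classification of degree-$\leq 1$ segments and polygons, which you assert rather than carry out. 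As it stands, the proposal is a correct proof of the easy inclusion together with a plausible research plan --- not a proof --- for the hard one.
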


Moreover, from \cite[Proposition 1.4]{BN2006Deg1} we obtain that a
$d$-dimensional lattice polytope $P$ has $\deg(P)=0$ if and only if
$P$ is a simplex, i.e., its vertices form an affine lattice basis. And
we also have that for a Lawrence prism $P$ with heights $h_1,...,h_d$
which satisfy $\sum_{i=1}^d h_i \geq 2$, then $\deg(P)=1$.

We can now state our main classification Theorem.

\begin{theorem} \label{thm: general d}
Let $P$ be a lattice polytope of dimension $d \geq 2$. Then:
\begin{itemize}
\item[i)] $\fineCD{P} \cong d+1$ if and only if $P \cong \Delta_d$ up to
  unimodular equivalence. Otherwise:
\item[ii)] $\fineCD{P} = d$ if and only if $P$ projects to the standard
  simplex $\Delta_{d-1}$ in dimension $d-1$. %\sofia{Cayley structure
                                %with three factors. Then $\fineCD{P}
                                %\ge \fineCD{\Delta_2} = 3$}
  If not:
\item[iii)] $\fineCD{P} = d - \frac{1}{2}$ if and only if $P$ is an
  exceptional simplex. And finally:
  % \sofia{If mu F > d-1 then necessarily deg of P is 1, so this
  % should follow from same classification if we are not in case i) or
  % ii).}
\item[iv)] $\fineCD{P} = d-1$ if $P$ projects to $\Delta_{d-2}$ and $P$
  does not satisfy the conditions of any of the previous cases.
\end{itemize}
Moreover, the Fine $\Q$-codegree cannot take any further values in the
range $d-1 \leq \fineCD{P} \leq d+1$ other than those specified above.
%\sofia{Benjamin: Finding the gap below d-1 is hard.}
\end{theorem}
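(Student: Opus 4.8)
The plan is to establish the classification by a ladder argument, proving each item in descending order of the Fine $\Q$-codegree value and, crucially, using the negation of each case as a standing hypothesis for the next. The backbone is the observation already recorded in the excerpt that $\fineCD{P} \le \cd(P)$ together with the degree-codegree duality $\deg(P) = d+1 - \cd(P)$, which lets us translate statements about high Fine $\Q$-codegree into statements about low degree. Since $\fineCD{P} \ge d-1$ forces $\cd(P) \ge d-1$, i.e.\ $\deg(P) \le 2$, the Batyrev--Nill classification (Theorem~\ref{thm: BenjaminDeg1}) restricts the relevant combinatorial types to a very short list: degree-$0$ simplices, Lawrence prisms, and exceptional simplices, with a single extra degree-$2$ layer to rule out.

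\textbf{Item (i).} First I would handle $\fineCD{P} = d+1$. The upper bound $\fineCD{P} \le \cd(P) \le d+1$ is immediate, and equality forces $\cd(P) = d+1$, hence $\deg(P) = 0$, so by the remark following Theorem~\ref{thm: BenjaminDeg1} $P$ is a lattice simplex whose vertices form an affine lattice basis, i.e.\ $P \cong \Delta_d$. Conversely a direct computation (the Fine core normals are $e_1,\dots,e_d, -\sum e_i$, all summing appropriately) gives $\fineCD{\Delta_d} = d+1$.

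\textbf{Items (ii)--(iv).} Assuming $P \not\cong \Delta_d$, I would next show $\fineCD{P} = d$ iff $P$ projects onto $\Delta_{d-1}$: the ``if'' direction uses monotonicity of $n^F$ under the natural projection (Remark~\ref{rmk: anyprojection} and Proposition~\ref{Prop:coreispt}) to pull $\fineCD{\Delta_{d-1}} = d$ up from dimension $d-1$, while for the ``only if'' I would argue that $\fineCD{P} = d$ forces $\deg(P) \le 1$, and then case-split via Theorem~\ref{thm: BenjaminDeg1} on whether $P$ is an exceptional simplex (excluded here by the value $d$ versus Proposition~\ref{prop:exceptionalsimplex}'s $d - \tfrac12$) or a Lawrence prism; a Lawrence prism with the right height data projects to $\Delta_{d-1}$. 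Item (iii) is then exactly Proposition~\ref{prop:exceptionalsimplex} packaged with the observation that an exceptional simplex is the unique type realizing the half-integer value $d-\tfrac12$, and it is genuinely disjoint from case (ii) because $d - \tfrac12 \ne d$. Item (iv) identifies the value $d-1$ with a projection onto $\Delta_{d-2}$, again by the project-and-lift lifting of $\fineCD{\Delta_{d-2}} = d-1$.

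\textbf{The hard part} will be the final ``moreover'' clause, the assertion that no Fine $\Q$-codegree values occur in the open gaps of the interval $[d-1, d+1]$ beyond $d+1, d, d-\tfrac12, d-1$. This is where I would lean hardest on the numerator machinery: the values in this range correspond to $n^F \in [\tfrac{1}{d+1}, \tfrac{1}{d-1}]$, and by Theorem~\ref{thm: FiniteNumerators} together with Lemma~\ref{lemma: dimdcircuits} the candidate Fine core normal configurations collapse (after discarding those with short positive circuits, which reduce to lower dimensions) to the simplicial configurations coming from degree $\le 2$ polytopes. The delicate point is to verify there is no degree-$2$ type squeezing a stray value strictly between $d-1$ and $d-\tfrac12$, or between $d-\tfrac12$ and $d$; I would rule these out by bounding the determinant expression $\eta_A$ from Theorem~\ref{thm: FiniteNumerators} and checking that the only admissible numerators compatible with $n^F$ in the window are those realized by the three named types, so that $\fineSpec{d} \cap [d-1, d+1]$ is exactly $\{d-1, d-\tfrac12, d, d+1\}$.
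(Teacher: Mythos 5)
Your items (i)--(iv) follow essentially the paper's own route: the chain $\mu(P) \le \fineCD{P} \le \cd(P) \le d+1$, the translation $\deg(P) = d+1-\cd(P)$, the Batyrev--Nill classification (Theorem~\ref{thm: BenjaminDeg1}), Proposition~\ref{prop:exceptionalsimplex}, and the projection inequality of Remark~\ref{rmk: anyprojection}, assembled in the same ladder order with the same standing negations. Two small points there: in the ``if'' direction of (ii) you must say explicitly that the standing hypothesis $P \not\cong \Delta_d$ gives $\deg(P)\ge 1$, hence $\cd(P) \le d$, which supplies the upper bound matching the lower bound $d \le \fineCD{P}$ pulled up by the projection; and Proposition~\ref{Prop:coreispt} is not the right citation, since the projection onto $\Delta_{d-1}$ need not be the natural projection --- Remark~\ref{rmk: anyprojection} alone is what you use.

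The genuine gap is in your ``moreover'' clause, which you flag as the hard part and then outsource to machinery that cannot deliver it. The route via Theorem~\ref{thm: FiniteNumerators} and Lemma~\ref{lemma: dimdcircuits}, ``bounding the determinant expression $\eta_A$'' over candidate Fine core normal configurations, is never actually executed in your sketch, and it cannot be executed in the generality required: the set $\mathcal{F}_d$ of Fine core normal configurations is not classified for $d \ge 4$ (this is exactly the obstruction the paper points out at the start of \S\ref{sec:general-d}), so there is no finite list of numerators to bound for arbitrary $d$. Worse, the ``delicate point'' you single out --- a degree-$2$ type squeezing a stray value strictly between $d-1$ and $d-\tfrac12$ --- is vacuous: $\deg(P)=2$ means $\cd(P)=d-1$, hence $\fineCD{P} \le d-1$, so degree-$2$ polytopes contribute nothing above $d-1$. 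That one-line observation is the paper's entire proof of the moreover clause, and it is already implicit in your opening paragraph: if $\fineCD{P} > d-1$, then since $\cd(P)$ is an integer with $\cd(P) \ge \fineCD{P}$ we get $\cd(P) \ge d$, i.e.\ $\deg(P) \le 1$; by Theorem~\ref{thm: BenjaminDeg1} such a $P$ is $\Delta_d$, a Lawrence prism, or an exceptional simplex, whose Fine $\Q$-codegrees you have already computed to be $d+1$, $d$, and $d-\tfrac12$ respectively. Replace your numerator argument by this integrality argument and the proof closes; as written, the final claim is unproved.
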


\begin{proof}
First of all, for a $d$-dimensional lattice polytope $P$ we have
\[
\mu(P) \leq \fineCD{P} \leq \cd (P) \leq d+1.
\]
To see $i)$ we make use of the fact that $\mu(P)$ and $\cd(P)$ equal
$d+1$ if and only if $\deg(P)=0$ and $P \cong \Delta_d$, as proven in
\cite{DiR14}. Hence, we can conclude that $\fineCD{P} = d+1$ if and only
if $P \cong \Delta_d$.

Now, recall that under any projection $\pi$, we have that 
\[
\fineCD{\pi(P)} \leq \fineCD{P}.
\]
To prove $ii)$ let us first assume that $\fineCD{P}=d$, i.e.,
$\deg(P)=1$. By Proposition \ref{prop:exceptionalsimplex}, we know
that since $P$ is not an exceptional simplex, then it must be a
Lawrence prism or equivalently, a Cayley sum of one-dimensional
segments. Hence, $P$ projects to $\Delta_{d-1}$. Conversely, if we now
assume that there is a projection $\pi$ mapping $P$ onto
$\Delta_{d-1}$, by our remark above, we have that $d \leq
\fineCD{P}$. But since $\cd(P) = d$, then it must be the case that
$\fineCD{P}=d$, which proves $ii)$.

In order to prove $iii)$, first assume that $\fineCD{P} = d -
\frac{1}{2}$. As in the previous case, we have that $\deg(P)=1$, so by
Theorem \ref{thm: BenjaminDeg1}, since we know $P$ is not a Lawrence
prism, it must be an exceptional simplex. To show the converse, we use
Proposition \ref{prop:exceptionalsimplex} to obtain that $\fineCD{P} = d
- \frac{1}{2}$, as desired.
%We will now prove the converse by induction on the dimension $d$ of
%$P$. Here we need to assume that $P$ is an $(d-2)$-fold pyramid over
%$2\Delta_2$. If $d=2$, we have that $P = 2\Delta_2$ and we can
%compute that $\fineCD{2\Delta_2} = \frac{3}{2} > 1$. Now let $d>2$. In
%this case we can inductively apply Theorem \ref{thm: MuFofPyramid},
%and in each step of the $(d-2)$-fold pyramid construction the Fine
%$\Q$-codegree increases by one. Hence, after $d-2$ steps we obtain
%$\fineCD{P} = \frac{3}{2} + d - 2 = d - \frac{1}{2}$ as desired,
%proving $iii)$.

Finally, to show $iv)$, if $P$ projects to $\Delta_{d-2}$ we know that
$d-1 = \fineCD{\Delta_{d-2}} \leq \fineCD{P}$. But since we are not in any
of the previous cases, by Theorem \ref{thm: BenjaminDeg1}, we know the
classification of degree 1 lattice polytopes is complete, hence it
must be the case that $\deg(P) \geq 2$, which implies $\cd(P) \leq
d-1$ and this forces $\fineCD{P}=d-1$.

To prove there are no further values of $\fineCD{P}$ between $d-1$ and
$d+1$, note that if this were the case, then $\fineCD{P} > d-1$ forces
$\cd(P) > d-1$ or equivalently $\deg(P) < 2$. Given that we have
completely classified the cases where $\deg(P)$ is either $0$ or $1$,
there are no further values the Fine $\Q$-codegree can take in this
interval.
\end{proof}

Therefore, using this Theorem, we are able to determine that the Fine
$\Q$-codegree spectrum for $P$ a $d$-dimensional polytope has the
following form:
\[
\fineSpec{d} = \left\{ d+1, d , d-\frac{1}{2}, d-1, ...  \right\}
\]
where the gap from $d-1$ down to the following value in the spectrum
is still yet to be determined.

\begin{question}
  It would be interesting to determine the end of the next gap:
  \[
    \max \left( \fineSpec{d} \cap \Q_{<d-1} \right) \,.
      % \setminus \left\{ d+1, d , d-\frac{1}{2}, d-1 \right\} \right)
  \]
\end{question}

\begin{question}
  Our theorem implies that the overall Fine spectrum
  \[
    \mathcal{S}^F \coloneq \bigcup_{d \ge 1} \left( \fineSpec{d} - d - 1
    \right) \ \subset \Q_{\le 0}
  \]
  starts off with a discrete part $0, -1, -3/2, -2, \ldots$. Is it
  true that it stays discrete beyond $-2$? Where is the first
  accumulation point? At what point does it become dense?
\end{question}

\providecommand{\bysame}{\leavevmode\hbox to3em{\hrulefill}\thinspace}
\providecommand{\href}[2]{#2}

\bibliography{bibliography}

@article{Paf15,
  title={Finiteness of the polyhedral {$\Q$}-codegree spectrum},
  author={Paffenholz, Andreas},
  journal={Proceedings of the American Mathematical Society},
  volume={143},
  number={11},
  pages={4863--4873},
  year={2015},
  archivePrefix = {arXiv},
  eprint = {1301.4967}
}

@article{BN07,
  title={Multiples of lattice polytopes without interior lattice points},
  author={Batyrev, Victor and Nill, Benjamin},
  journal={Mosc. Math. J.},
  volume={7},
  pages={195--207},
  year={2007},
  archivePrefix = {arXiv},
  eprint = {0602336}
}

@article{DiR14,
  title={Polyhedral adjunction theory},
  author={Di Rocco, Sandra and Haase, Christian and Nill, Benjamin and Paffenholz, Andreas},
  journal={Algebra \& Number Theory},
  volume={7},
  number={10},
  pages={2417--2446},
  year={2014},
  publisher={Mathematical Sciences Publishers},
  archivePrefix = {arXiv},
  eprint = {1105.2415}
}

@article{LZ91,
  title={Bounds for lattice polytopes containing a fixed number of interior points in a sublattice},
  author={Lagarias, Jeffrey C and Ziegler, G{\"u}nter M},
  journal={Canadian Journal of Mathematics},
  volume={43},
  number={5},
  pages={1022--1035},
  year={1991},
  publisher={Cambridge University Press}
}

@article {latticePyramid,
    AUTHOR = {Nill, Benjamin},
     TITLE = {Proof of a conjecture of {B}atyrev and {J}uny on {G}orenstein
              polytopes},
   JOURNAL = {Discrete Comput. Geom.},
  FJOURNAL = {Discrete \& Computational Geometry. An International Journal
              of Mathematics and Computer Science},
    VOLUME = {72},
      YEAR = {2024},
    NUMBER = {4},
     PAGES = {1519--1529},
      ISSN = {0179-5376,1432-0444},
   MRCLASS = {52B20},
  MRNUMBER = {4827075},
       DOI = {10.1007/s00454-023-00575-0},
       URL = {https://doi.org/10.1007/s00454-023-00575-0},
}

@book {MR546291,
    AUTHOR = {Oda, Tadao},
     TITLE = {Torus embeddings and applications},
    SERIES = {Tata Institute of Fundamental Research Lectures on Mathematics
              and Physics},
    VOLUME = {57},
      NOTE = {Based on joint work with Katsuya Miyake},
 PUBLISHER = {Tata Institute of Fundamental Research, Bombay;
              Springer-Verlag, Berlin-New York},
      YEAR = {1978},
     PAGES = {xi+175},
      ISBN = {3-540-08852-0},
   MRCLASS = {14-02 (14E35 14K20 14M17 32J15)},
  MRNUMBER = {546291},
MRREVIEWER = {I.\ Dolgachev},
}

@article{kohl2020unconditional,
  AUTHOR = {Kohl, Florian and Olsen, McCabe and Sanyal, Raman},
     TITLE = {Unconditional reflexive polytopes},
   JOURNAL = {Discrete Comput. Geom.},
  FJOURNAL = {Discrete \& Computational Geometry. An International Journal of Mathematics and Computer Science},
    VOLUME = {64},
      YEAR = {2020},
    NUMBER = {2},
     PAGES = {427--452},
      ISSN = {0179-5376,1432-0444},
   MRCLASS = {52B20 (05C17 52B12 52B15)},
  MRNUMBER = {4131556},
MRREVIEWER = {Matthias\ Beck},
       DOI = {10.1007/s00454-020-00199-8},
       URL = {https://doi.org/10.1007/s00454-020-00199-8},
}

@article{latticepolygons12,
  AUTHOR = {Poonen, Bjorn and Rodriguez-Villegas, Fernando},
     TITLE = {Lattice polygons and the number 12},
   JOURNAL = {Amer. Math. Monthly},
  FJOURNAL = {American Mathematical Monthly},
    VOLUME = {107},
      YEAR = {2000},
    NUMBER = {3},
     PAGES = {238--250},
      ISSN = {0002-9890,1930-0972},
   MRCLASS = {52B20 (11F23 14M25)},
  MRNUMBER = {1742122},
MRREVIEWER = {Laura\ M.\ Anderson},
       DOI = {10.2307/2589316},
       URL = {https://doi.org/10.2307/2589316},
}

@phdthesis{Bal18,
  title={Classifications, volume bounds and universal Ehrhart inequalities of lattice polytopes},
  author={Balletti, Gabriele},
  year={2018},
  school={Department of Mathematics, Stockholm University}
}

@article { KN11,
	title = {Reflexive polytopes of higher index and the number 12},
	year = {2011},
	month = {July},
    journal={arXiv preprint arXiv:1107.4945},
	author = {Kasprzyk, Alexander Mieczyslaw and Nill, Benjamin}
}

@misc{BK,
  author = {Brown, Gavin  and Kasprzyk, Alexander},
  title = {Graded Ring Database},
  howpublished = {\url{http://www.grdb.co.uk/Index}},
  urldate = {2024-10-10}
}

@article { Kas08,
	title = {Canonical toric Fano threefolds},
	year = {2010},
	volume = {62},
	number = {6},
	pages = {1293-1309},
	fjournal = {Canadian Journal of Mathematics},
	mrnumber = {MR2760660},
	author = {Kasprzyk, Alexander Mieczyslaw}
}

@article{BN2006Deg1,
author = {Batyrev, Victor and Nill, Benjamin},
year = {2006},
month = {02},
pages = {},
title = {Multiples of Lattice Polytopes without Interior Lattice Points},
volume = {7},
journal = {Moscow Mathematical Journal},
doi = {10.17323/1609-4514-2007-7-2-195-207}
}

@inproceedings{gawrilow2000polymake,
  title={Polymake: a framework for analyzing convex polytopes},
  author={Gawrilow, Ewgenij and Joswig, Michael},
  booktitle={Polytopes—combinatorics and computation},
  pages={43--73},
  year={2000},
  organization={Springer}
}

@article{assarf2017computing,
  title={Computing convex hulls and counting integer points with polymake},
  author={Assarf, Benjamin and Gawrilow, Ewgenij and Herr, Katrin and Joswig, Michael and Lorenz, Benjamin and Paffenholz, Andreas and Rehn, Thomas},
  journal={Mathematical Programming Computation},
  volume={9},
  pages={1--38},
  year={2017},
  publisher={Springer}
}

@article{fujita1992kodaira,
  title={On Kodaira energy and adjoint reduction of polarized manifolds},
  author={Fujita, Takao},
  journal={manuscripta mathematica},
  volume={76},
  number={1},
  pages={59--84},
  year={1992},
  publisher={Springer}
}

@article{fujita1995kodaira,
  title={On Kodaira energy and classification of polarized varieties},
  author={Fujita, Takao},
  journal={Sugaku Expositions},
  volume={8},
  number={2},
  pages={183--196},
  year={1995},
  publisher={Providence, RI, USA: The Society, c1988-}
}

@article {di17fujita,
    AUTHOR = {Di Cerbo, Gabriele},
     TITLE = {On {F}ujita's spectrum conjecture},
   JOURNAL = {Adv. Math.},
  FJOURNAL = {Advances in Mathematics},
    VOLUME = {311},
      YEAR = {2017},
     PAGES = {238--248},
      ISSN = {0001-8708,1090-2082},
   MRCLASS = {14C20 (14E30)},
  MRNUMBER = {3628215},
MRREVIEWER = {Jaros\l aw\ A.\ Wi\'sniewski},
       DOI = {10.1016/j.aim.2017.02.018},
       URL = {https://doi.org/10.1016/j.aim.2017.02.018},
}

@article {mora2023fine,
    AUTHOR = {Garz\'on, Sof\'ia and Haase, Christian},
     TITLE = {Fine polyhedral adjunction theory},
   JOURNAL = {S\'em. Lothar. Combin.},
  FJOURNAL = {S\'eminaire Lotharingien de Combinatoire},
    VOLUME = {89B},
      YEAR = {2023},
     PAGES = {Art. 62, 12},
      ISSN = {1286-4889},
   MRCLASS = {52B20 (14E30 14M25)},
  MRNUMBER = {4659570},
}
\bibliographystyle{alpha}

\end{document}